\pgfplotsset{
        cycle list/Dark2-7,
        cycle multiindex* list={
            mark list*\nextlist
            Dark2-7\nextlist
        },
    }
\pgfplotsset{every axis plot/.append style={thick}}
\tikzset{external/only named=true}
\newtheorem{theorem}{Theorem}
\newcommand \bfn{\boldsymbol{n}}
\newcommand\cellO{T}
\newcommand\faceO{F}
\newcommand\cellg{K}
\newcommand\TcapK{\tilde{E}}
\pgfplotsset{compat=1.17}
\begin{document}

\title{A comparison of non-matching
techniques for the finite element approximation of interface problems}

\author[1,2,b]{Daniele Boffi}
\author[3,b]{Andrea Cangiani}
\author[3,a]{Marco Feder}
\author[4,5,b]{\authorcr Lucia Gastaldi}
\author[3,b]{Luca Heltai}
\affil[1]{CEMSE division, King Abdullah University of Science and Technology (KAUST), Thuwal, Saudi Arabia}
\affil[2]{
Dipartimento di Matematica ‘F. Casorati’, Universit\`a degli Studi di Pavia, Pavia, Italy}
\affil[3]{Mathematics Area, SISSA,
Trieste, Italy}
\affil[4]{DICATAM Sez. di Matematica, Universit\`a degli Studi di Brescia, Brescia, Italy}
\affil[5]{Istituto di Matematica Applicata e Tecnologie Informatiche (IMATI-CNR), Pavia, Italy\vspace{1cm}}

\affil[a]{Lead author, responsible for the overall direction and content of the manuscript}
\affil[b]{Contributing author, provided significant input and contributions to the manuscript}

\maketitle

{\centering\small Dedicated to Leszek Demkowicz on the occasion of his 70\textsuperscript{th} birthday.\par}

\begin{abstract}
We perform a systematic comparison of various numerical schemes for the approximation of interface problems. We consider unfitted approaches in view of their application to possibly moving configurations.
Particular attention is paid to the implementation aspects and to the analysis of the costs related to the different phases of the simulations.

\end{abstract}

\section{Introduction}
\label{sec:intro}
The efficient numerical solution of partial differential equations modeling the interaction of physical phenomena across interfaces with complex, possibly moving, shapes is of great importance in many scientific fields. We refer, for instance, to %
fluid-structure interaction, %
or crack propagation, just to mention two relevant examples.
A crucial issue is the handling of computational grids. In this respect, we can classify computational methods for interface problems into two families: boundary fitted methods and boundary unfitted methods. For time dependent problems, the former are typically handled using the Arbitrary Eulerian Lagrangian formulation (\cite{Donea_ALE}, \cite{Hirt1997AnAL}), where meshes are deformed in a conforming way with respect to movements of the physical domains. In this case, the imposition of interface conditions is usually easy to implement. However, an accurate description of both meshes is required, and the allowed movements are restricted by the topological structure of the initial state. When topology may change, or when the grid undorgoes severe deformations, these methods require remeshing. An operation which is computationally very expensive in time dependent scenarios and three dimensional settings. Conversely, unfitted approaches are based on describing the physical domains as embedded into a constant background 
mesh. As it does not require remeshing, this approach is extremely flexible, but it requires sophisticated methods to represent interfaces. Among unfitted approaches, we mention the Immersed Boundary Method~\cite{peskin_2002,Boffi_Gastaldi_Heltai_Peskin},
the Cut Finite Element Method~\cite{cutFEM}, and the Extended Finite Element Method~\cite{ExtendedFEM}. Another popular choice is the Fictitious Domain Method with Lagrange multipliers, proposed by Glowinski, Pan and Periaux in~\cite{GlowinskiPanPeriaux1994} for a Dirichlet problem, analyzed in~\cite{GiraultGlowinski}, and then extended to particulate flows in~\cite{GLOWINSKI1999755}. 

The use of a Lagrange multiplier for dealing with Dirichlet boundary conditions was introduced in the seminal work by Babu\v{s}ka \cite{Babuska}.
This is formulated in terms of a symmetric saddle point problem where the condition at the interface is enforced through the use of a Lagrange multiplier. The main drawback of this method is that it suffers from a loss of accuracy at interfaces, even if it is %
known~\cite{Heltai2019} that this detrimental effect on the convergence properties of the approximate solution is a local phenomenon, restricted to a small neighbourhood of the interface. From the computational standpoint, the Fictitious Domain Method  poses the additional challenge of assembling coupling terms involving basis functions living on different meshes. In this context, one can distribute quadrature points on the immersed mesh and let it drive the integration process, or %
one may compute a composite quadrature rule by identifying exact intersections (polytopes, in general) between the two meshes. This approach has been presented in different papers and frameworks: for instance, in~\cite{parmoonolith}, a high performance library has been developed to perform such tasks, and in~\cite{Boffi_Credali_Gastaldi} it has been shown how composite rules on interfaces turn out to be necessary to recover optimal rates for a fluid-structure interaction problem where both the solid and the fluid meshes are two dimensional objects. In the cut-FEM framework, we mention ~\cite{cut3D} for an efficient implementation of Nitsche's method on three dimensional overlapping meshes.

In this paper we consider  a two or three dimensional domain with an immersed interface of co-dimension one and we study the numerical approximation of the solution of an elliptic PDE whose solution is prescribed along the interface. Although in our case the domain and the interface are fixed, we are discussing the presented numerical schemes in view of their application to more general settings.
We perform a systematic comparison between three different unfitted approaches, analyzing them in terms of accuracy, computational cost, and implementation effort. %
In particular, we perform a comparative analysis in terms of accuracy and CPU times for the Lagrange multiplier method, Nitsche's penalization method, and cut-FEM for different test cases, and discuss the benefit of computing accurate quadrature rules on mesh intersections.

The outline of the paper is as follows. In Section~\ref{sec:model-problem} we introduce the model Poisson problem posed on a domain with an internal boundary and in Section~\ref{sec:methods} we review the Lagrange multiplier method, the Nitsche
penalization method, and the cut-FEM method for its solution. %
In Section \ref{sec:integration} we
discuss the central issue of the numerical
integration of the coupling terms, while in
Section \ref{sec:numberical_experiments} we present a numerical comparison of
the three methods, focusing on the validation of the implementation, on how
coupling terms affect the accuracy of numerical solutions, and on computational times.
Finally, the results are summarised in Section~\ref{sec:conc}.

\section{Model problem and notation}
\label{sec:model-problem}
Let $\omega$ be a closed and bounded domain of $\mathbb{R}^d$, $d=2,3$, with Lipschitz continuous boundary $\gamma \coloneqq \partial \omega$, and $\Omega \subset \mathbb{R}^d$ a Lipschitz domain such that $\omega \Subset \Omega$; see Figure~\ref{fig:domains} for a prototypical configuration. We consider the model problem
\begin{equation}
\label{problem}
    \begin{cases}
  -\Delta u &= f  \qquad \text{in}\ \Omega \setminus \gamma, \\
  u &= g \qquad \text{on}\ \gamma, \\
  u &= 0 \qquad \text{on}\ \Gamma \coloneqq \partial \Omega,
\end{cases}
\end{equation}
for given data $f \in L^2(\Omega)$ and $g \in H^{\frac{1}{2}}(\gamma)$.
 Throughout this work we refer to $\Omega$ as the \textit{background} domain, while we refer to $\omega$ as the \textit{immersed} domain, and $\gamma$ as the \emph{immersed boundary}. The rationale behind this setting is that it allows to solve problems in a complex and possibly time dependent domain $\omega$, by embedding the problem in a simpler background domain $\Omega$ -- typically a box -- and imposing some constraints on the immersed boundary $\gamma$. For the sake of simplicity, we consider the case in which the immersed domain is \textit{entirely} contained in the background domain, but more general configurations may be considered. 
 
 As ambient spaces for~\eqref{problem}, we consider $V(\Omega) \coloneqq H_0^1(\Omega)= \{ v \in H^1(\Omega): v_{|\Gamma} =0 \}$ and $Q(\gamma) \coloneqq H^{-\frac{1}{2}}(\gamma)$.  Given a domain $D\subset\mathbb{R}^d$ and  a real number $s\ge 0$, we denote by $\|\cdot\|_{s,D}$ the standard norm of $H^s(D)$. In particular, $\|\cdot\|_{0,D}$ stands for the $L^2$-norm stemming from the standard $L^2$-inner product  $(\cdot, \cdot)_{D}$ on $D$. Finally, with $\langle \cdot, \cdot \rangle_{\gamma}$ we denote the standard duality pairing between $Q(\gamma)$ and its dual $Q'(\gamma)=H^{\frac12}(\gamma)$. 
 
 Across the immersed boundary $\gamma$, we define the \textit{jump} operator as
\begin{align*}
      \llbracket     v \rrbracket_{|\gamma} = v^{+} - v^{-}, \\
    \llbracket \boldsymbol{\tau} \rrbracket_{|\gamma} = \boldsymbol{\tau^+} - \boldsymbol{\tau}^-,
\end{align*}
for smooth enough scalar- and vector- valued functions $v$ and $\boldsymbol{\tau}$. Here, $v^{\pm}$ and $\boldsymbol{\tau}^{\pm}$ are external and internal traces defined according to the direction of the outward normal $\boldsymbol{n}$ to $\omega$ at $\gamma$.

Problem \eqref{problem} can be written as a constrained minimization problem by introducing the Lagrangian $\mathcal{L} : V(\Omega) \times Q(\gamma)  \rightarrow \mathbb{R}$ defined as \begin{equation}
    \mathcal{L}(v,q)\coloneqq\frac{1}{2}(\nabla v,\nabla v)_{\Omega} - (f,v )_{\Omega} + \langle q, v-g \rangle_{\gamma}.
\end{equation}
Looking for stationary points of $\mathcal{L}$ gives the following saddle point problem of finding a pair $(u,\lambda) \in V(\Omega) \times Q(\gamma)$ such that

\begin{eqnarray}
\label{eqn:LM1}
(\nabla u, \nabla v)_{\Omega} + \langle\lambda, v\rangle_{\gamma} &=& (f,v)_{\Omega} \qquad \forall v \in V(\Omega), \\
\label{eqn:LM2}
\langle  q, u\rangle_{\gamma} &=& \langle q,g\rangle_{\gamma} \qquad \forall q \in Q(\gamma),
\end{eqnarray}
Below in Theorem~\ref{th:existence_LM} we show that this problem admits a unique solution.
Starting from \eqref{problem} and integrating by parts, one can easily show that
setting $\lambda =- \llbracket {\nabla u \cdot \boldsymbol{n}}\rrbracket_{|\gamma}$,
the pair $(u,\lambda)\in V(\Omega) \times Q(\gamma)$ is the solution of~\eqref{eqn:LM1}-\eqref{eqn:LM2}. Conversely, with proper choices for $v\in V(\Omega)$ in~\eqref{eqn:LM1} one gets that $-\Delta u=f$ in $\Omega\setminus\gamma$, while~\eqref{eqn:LM2} implies $u=g$ on $\gamma$, so that~\eqref{eqn:LM1}-\eqref{eqn:LM2} are equivalent to~\eqref{problem} with  $-\lambda$ equal to the jump of the normal derivative of $u$ on the interface $\gamma$. Moreover, if the datum $g$ is sufficiently smooth, say $g \in H^s(\gamma)$ for $s>1$, we can further take $\lambda \in L^2(\gamma)$ and use in practice $Q(\gamma) = L^2(\gamma)$. 

In the following theorem we sketch the proof of existence and uniqueness of the solution of~\eqref{eqn:LM1}-\eqref{eqn:LM2}.
\begin{theorem}
Given $f\in L^2(\Omega)$ and $g\in H^{1/2}(\gamma)$, there exists a unique solution of
Problem~\eqref{eqn:LM1}-\eqref{eqn:LM2} satisfying the following stability estimate
\[
\|u\|_{1,\Omega}+\|\lambda\|_{-1/2,\gamma}\le
C(\|f\|_{0,\Omega}+\|g\|_{1/2,\gamma}).
\]
\label{th:existence_LM}
\end{theorem}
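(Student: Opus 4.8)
The plan is to recognise \eqref{eqn:LM1}-\eqref{eqn:LM2} as an abstract saddle point problem and to apply the Babu\v{s}ka--Brezzi theory. Set $a(u,v) \coloneqq (\nabla u,\nabla v)_\Omega$ on $V(\Omega)\times V(\Omega)$ and $b(v,q)\coloneqq \langle q,v\rangle_\gamma$ on $V(\Omega)\times Q(\gamma)$. Continuity of $a$ is immediate, and continuity of $b$ follows from the continuity of the trace map $V(\Omega)=H^1_0(\Omega)\to H^{1/2}(\gamma)$ combined with the definition of the duality pairing $\langle\cdot,\cdot\rangle_\gamma$. The kernel of $b$ is $K=\{v\in V(\Omega): v_{|\gamma}=0\}\subset H^1_0(\Omega)$, and on $K$ -- indeed on all of $H^1_0(\Omega)$ -- the form $a$ is coercive by the Poincar\'e--Friedrichs inequality, $a(v,v)=\|\nabla v\|_{0,\Omega}^2\ge c\|v\|_{1,\Omega}^2$. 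Finally, the linear functionals $v\mapsto (f,v)_\Omega$ and $q\mapsto \langle q,g\rangle_\gamma$ on $V(\Omega)$ and $Q(\gamma)$ are bounded by $\|f\|_{0,\Omega}$ (using Poincar\'e again) and $\|g\|_{1/2,\gamma}$, respectively.

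\textbf{The inf-sup condition.} The key point is to establish the existence of $\beta>0$ such that
\[
\sup_{v\in V(\Omega)\setminus\{0\}}\frac{\langle q,v\rangle_\gamma}{\|v\|_{1,\Omega}}\ \ge\ \beta\,\|q\|_{-1/2,\gamma}\qquad\forall q\in Q(\gamma).
\]
I would prove this by constructing a bounded right inverse of the trace operator: for every $\mu\in H^{1/2}(\gamma)$ there is $E\mu\in H^1_0(\Omega)$ with $(E\mu)_{|\gamma}=\mu$ and $\|E\mu\|_{1,\Omega}\le C_E\|\mu\|_{1/2,\gamma}$. This is exactly where the hypothesis $\omega\Subset\Omega$ is used: one defines $E\mu$ piecewise, e.g. as the harmonic (or any $H^1$-bounded) extension of $\mu$ into $\omega$, and as the solution in $\Omega\setminus\overline\omega$ of a Dirichlet problem with datum $\mu$ on $\gamma$ and $0$ on $\Gamma$, the two pieces matching across $\gamma$ so that $E\mu\in H^1_0(\Omega)$. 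Given $q\in Q(\gamma)$, choose $\mu\in H^{1/2}(\gamma)$ with $\langle q,\mu\rangle_\gamma\ge\tfrac12\|q\|_{-1/2,\gamma}\|\mu\|_{1/2,\gamma}$ (possible since $\|q\|_{-1/2,\gamma}$ is the supremum of such ratios); then $v\coloneqq E\mu$ gives $\langle q,v\rangle_\gamma=\langle q,\mu\rangle_\gamma\ge\tfrac12\|q\|_{-1/2,\gamma}\|\mu\|_{1/2,\gamma}\ge\tfrac{1}{2C_E}\|q\|_{-1/2,\gamma}\|v\|_{1,\Omega}$, so $\beta=1/(2C_E)$ works.

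\textbf{Conclusion and main obstacle.} Continuity of $a$ and $b$, coercivity of $a$ on $K$, and the inf-sup condition are precisely the hypotheses of Brezzi's theorem, which then yields a unique pair $(u,\lambda)\in V(\Omega)\times Q(\gamma)$ solving \eqref{eqn:LM1}-\eqref{eqn:LM2} together with $\|u\|_{1,\Omega}+\|\lambda\|_{-1/2,\gamma}\le C(\|f\|_{0,\Omega}+\|g\|_{1/2,\gamma})$, with $C$ depending only on the Poincar\'e constant of $\Omega$, the trace constant, and $\beta$. The only genuinely non-routine ingredient is the construction of the bounded trace lifting $E$ into $H^1_0(\Omega)$ (equivalently, surjectivity with closed range of the trace operator from $H^1_0(\Omega)$ onto $H^{1/2}(\gamma)$); once this is in place, the remaining verifications are textbook. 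Since the statement only asks for a sketch, I would present the abstract framework and the construction of $E$ in detail and leave the continuity and coercivity estimates to the reader.
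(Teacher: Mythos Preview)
Your proposal is correct and follows essentially the same route as the paper: recognise the problem as a saddle point problem, verify coercivity of $a$ on the kernel via Poincar\'e, and establish the inf--sup condition by invoking a bounded right inverse of the trace map $H^1_0(\Omega)\to H^{1/2}(\gamma)$. The paper is slightly terser (it simply quotes the trace lifting and obtains $\beta_0=1/C_1$ without your intermediate choice of $\mu$, which costs you an unnecessary factor of $2$), but the argument is the same.
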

\begin{proof}
Problem~\eqref{eqn:LM1}-\eqref{eqn:LM2} is a saddle point problem, hence to show existence and uniqueness of the solution we need to check the \emph{ellipticity on the kernel} and the \emph{inf-sup condition}~\cite{bbf}. The kernel
$\mathbb{K}=\{v\in V(\Omega): \langle q,v\rangle=0\ \forall q\in Q(\gamma)\}$,
can be identified with the subset of functions in $V(\Omega)$ with vanishing trace along $\gamma$. Then thanks to the Poincaré inequality we have that there exists $\alpha_0>0$ such that
\[
(\nabla u, \nabla u)_{\Omega}\ge \alpha_0 \|u\|^2_{1,\Omega}.
\]
The inf-sup condition can be verified using the definition of the norm in $Q(\gamma)$ and the fact that, by the trace theorem, for each $w\in H^{1/2}(\gamma)$ there exists at least an element $v\in V(\Omega)$
such that $v=w$ on $\gamma$, with  $\|v\|_{1,\Omega}\le C_1\|w\|_{1/2,\gamma}$. 
Hence we get the inf-sup condition
\[
\inf_{q\in Q(\gamma)}\sup_{v\in V(\Omega)}
\frac{\langle q,v\rangle}{\|q\|_{-1/2,\gamma}\|v\|_{1,\Omega}}\ge\beta_0,
\]
with $\beta_0=1/C_1$.
\end{proof} 
A detailed analysis for the Dirichlet problem, where the Lagrange multiplier is used to impose the boundary condition, can be found in the pioneering work by Glowinski, Pan and Periaux~\cite{GlowinskiPanPeriaux1994}.

\section{Non-matching discretizations}
\label{sec:methods}
We assume that both $\Omega$  and  $\omega$ are Lipschitz domains and we
discretize the problem introducing computational meshes for the domain $\Omega$ and for
the immersed boundary $\gamma$ which are  
\textit{unfitted} with respect to each other in that they are constructed independently.
The computational meshes $\Omega_h$ of $\Omega$ and $\gamma_h$ of $\gamma$
consist of disjoint elements such that $\Omega = \bigcup_{\cellO \in \Omega_h}
\cellO$ and $\gamma = \bigcup_{\cellg \in \gamma_h} \cellg$. When $d=2$,
$\Omega_h$ will be a triangular or quadrilateral mesh and $\gamma_h$ a mesh
composed by straight line segments. For $d=3$, $\Omega_h$ will be a tetrahedral
or hexahedral mesh and $\gamma_h$ a surface mesh whose elements are triangles or
planar quadrilaterals embedded in the three dimensional space. We denote by
$h_\Omega$ and $h_\gamma$ the mesh sizes of $\Omega_h$ and $\gamma_h$,
respectively. For simplicity, we ignore geometrical errors in the
discretizations of $\Omega$, and $\gamma$, and we assume that the mesh sizes
$h_\Omega$ and $h_\gamma$ are small enough so that the geometrical error is
negligible with respect to the discretization error
(see Sect.~\ref{sec:level_set_splitting} for a quantitative estimate of the
geometrical error in our numerical experiments).

We consider discretizations of $V(\Omega)$ based on standard Lagrange finite elements, namely
\begin{align}
\label{V_h}
V_h(\Omega) \coloneqq \{v \in H^1_{0}(\Omega): v|_{\cellO}  \in \mathcal{R}^p(\cellO), \forall\cellO \in \Omega_h \} \qquad p \geq 1,
\end{align}
with $\mathcal{R}^p(\cellO):=\mathcal{P}^p(\cellO)$ or $\mathcal{Q}^p(\cellO)$, the spaces of polynomials of total degree up to $p$
or of degree $p$ separately in each variable, respectively, depending on whether  $\cellO$ is a simplex or a quadrilateral/hexahedron. 
For the cut-FEM method, the corresponding spaces will be constructed separately in each subdomain; this results in a doubling of the degrees of freedom on the elements cut by the interface (see Sect.~\ref{subsec:cut-FEM}).

For the Lagrange multiplier space $Q(\gamma)$ we employ the space of piecewise-polynomial functions
\begin{align}
Q_h(\gamma) \coloneqq \{q \in L^2(\gamma): q|_{\cellg} \in \mathcal{R}^p(\cellg), \forall\cellg \in \gamma_h\} \qquad p \geq 0,
\end{align} 
which we equip with the mesh dependent norm 
\begin{equation}
    \label{eqn:frac_norm}
    \| q \|_{-\frac{1}{2},\gamma} \coloneqq \|h^{-\frac{1}{2}} q \|_{0,\gamma}\qquad\forall q\in Q_h(\gamma),
\end{equation}
where $h$ is the piecewise constant function given by  $h|_{\cellg}=h_\cellg$ the diameter of $\cellg$ for each $\cellg\in\gamma_h$. The definition and the notation are justified by the fact that, on a quasi-uniform meshe, the mesh dependent norm is equivalent to the norm in $H^{1/2}(\gamma)$.

\subsection{The method of Lagrange Multipliers}
\label{subsubsec:LM_discretisation}
The discrete counterpart of \eqref{eqn:LM1}-\eqref{eqn:LM2} is to find a pair $(u_h, \lambda_h) \in V_h \times Q_h$ such that
\begin{eqnarray}
\label{eqn:LM_algebraic_1}
(\nabla u_h, \nabla v_h)_{\Omega} + \langle \lambda_h, v_h\rangle_{\gamma} &=& ( f,v_h)_{\Omega} \qquad \forall v_h \in V_h, \\
\label{eqn:LM_algebraic_2}
\langle  q_h,u_h\rangle_{\gamma} &=& \langle q_h, g\rangle_{\gamma} \qquad \forall q_h \in Q_h.
\end{eqnarray}
The next theorem states existence, uniqueness, and stability of the discrete solution together with optimal error estimates.
\begin{theorem}
Assume that the mesh $\gamma_h$ is quasi uniform and that there exists a positive constant ${\rm C}_r$ independent of $h_\Omega$ and $h_\gamma$ such that $h_\Omega/h_\gamma\le {\rm C}_r$. Then, there exists a unique solution $(u_h,\lambda_h)\in V_h\times Q_h$ of Problem~\eqref{eqn:LM_algebraic_1}-\eqref{eqn:LM_algebraic_2}.
Moreover, it holds
\begin{equation}
\|u-u_h\|_{1,\Omega}+\|\lambda-\lambda_h\|_{-1/2,\gamma}\le
{\rm C} \inf_{\substack{v_h\in V_h\\ \mu_h\in Q_h}}
\left(\|u-v_h\|_{1,\Omega}+\|\lambda-\mu_h\|_{-1/2,\gamma}\right),
\label{eq:err_estimate}
\end{equation}
 with ${\rm C}>0$ a constant independent of the mesh sizes $h_\Omega$ and $h_\gamma$.
\label{th:existenceLMh}
\end{theorem}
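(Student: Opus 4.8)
\medskip
\noindent The plan is to read \eqref{eqn:LM_algebraic_1}--\eqref{eqn:LM_algebraic_2} as the conforming Galerkin discretisation on $V_h\times Q_h$ of the saddle point problem of Theorem~\ref{th:existence_LM}, and to apply the abstract theory of mixed problems~\cite{bbf}. Writing $a(u,v)=(\nabla u,\nabla v)_\Omega$ and $b(q,v)=\langle q,v\rangle_\gamma$, the form $a$ is continuous on $H^1_0(\Omega)$ and $b$ is continuous on $Q_h\times V_h$ for the mesh-dependent norm~\eqref{eqn:frac_norm}, by the trace theorem and the norm equivalence recalled there (tacitly, as in the remark following~\eqref{eqn:LM2}, we assume enough regularity of $g$ that $\lambda\in L^2(\gamma)$, so that the mesh-dependent norm of $\lambda-\mu_h$ is meaningful). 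It then remains to establish, with constants independent of $h_\Omega$ and $h_\gamma$, two discrete conditions: ellipticity of $a$ on the discrete kernel $\mathbb{K}_h=\{v_h\in V_h:\langle q_h,v_h\rangle_\gamma=0\ \ \forall q_h\in Q_h\}$, and a discrete inf--sup condition for $b$. Once both are available, Brezzi's theorem yields existence, uniqueness, the stability bound and the quasi-optimal estimate~\eqref{eq:err_estimate}, with the final constant ${\rm C}$ depending only on the ellipticity, inf--sup and continuity constants.

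\medskip
\noindent Discrete ellipticity on the kernel is immediate and needs no compatibility between the two meshes: since $V_h\subset H^1_0(\Omega)$, the same Poincar\'e inequality used in the proof of Theorem~\ref{th:existence_LM} gives $(\nabla v_h,\nabla v_h)_\Omega\ge\alpha_0\|v_h\|_{1,\Omega}^2$ for \emph{every} $v_h\in V_h$, hence in particular on $\mathbb{K}_h$, with the same $\alpha_0$ as in the continuous case.

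\medskip
\noindent The discrete inf--sup condition is the crux and the step I expect to be the main obstacle, because $\mathbb{K}_h$ is not contained in the continuous kernel --- its elements need only be $L^2(\gamma)$-orthogonal to $Q_h$, not vanish on $\gamma$ --- so the bound cannot be inherited directly from Theorem~\ref{th:existence_LM}. I would prove it by exhibiting, for a given $q_h\in Q_h$, a test function $v_h\in V_h$ supported in the union of the background cells of $\Omega_h$ meeting $\gamma$ (a strip of width $\mathcal{O}(h_\Omega)$), whose nodal values there are built from suitably scaled local averages of $q_h$, so that $\langle q_h,v_h\rangle_\gamma\gtrsim\|q_h\|_{-1/2,\gamma}^2$ while inverse inequalities on that strip and on $\gamma_h$ give $\|v_h\|_{1,\Omega}\lesssim\|q_h\|_{-1/2,\gamma}$; equivalently, one packages this as a Fortin operator $\Pi_h:V(\Omega)\to V_h$ satisfying $\langle q_h,v-\Pi_h v\rangle_\gamma=0$ for all $q_h\in Q_h$ and $\|\Pi_h v\|_{1,\Omega}\le C\|v\|_{1,\Omega}$, and then transports the continuous inf--sup of Theorem~\ref{th:existence_LM} to $V_h\times Q_h$. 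In either route, the quasi-uniformity of $\gamma_h$ is what allows passing between the $L^2(\gamma)$ norm and the mesh-dependent norm~\eqref{eqn:frac_norm}, and the assumption $h_\Omega/h_\gamma\le{\rm C}_r$ is exactly what makes the construction uniformly stable: it ensures that each element of $\gamma_h$ is resolved by $\mathcal{O}(1)$ background cells, so that the map sending the background degrees of freedom near $\gamma$ to their $L^2(\gamma)$-pairing with $Q_h$ has a right inverse bounded solely in terms of ${\rm C}_r$ and the shape-regularity constants. (Alternatively, one may invoke and adapt to the present interface setting the analysis of the fictitious-domain Lagrange multiplier method of~\cite{GiraultGlowinski,GlowinskiPanPeriaux1994}.) Feeding the resulting $\alpha_0$ and $\beta$ into the abstract estimates of~\cite{bbf} then gives~\eqref{eq:err_estimate}.
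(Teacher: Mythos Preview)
Your proposal is correct and follows the same route as the paper's proof: verify discrete ellipticity on the kernel and a discrete inf--sup condition, then invoke the abstract mixed theory of~\cite{bbf} to obtain well-posedness and~\eqref{eq:err_estimate}. The only notable difference is that for ellipticity the paper argues via $\int_\gamma v_h\,ds=0$ on $\mathcal{K}_h$ and the Brenner--Scott Poincar\'e inequality, whereas your direct use of $V_h\subset H^1_0(\Omega)$ is simpler and equally valid; for the inf--sup the paper, like you, only sketches the argument (continuous inf--sup, Cl\'ement interpolation, trace theorem, inverse inequality) and refers to~\cite{BG_numermath} for details.
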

\begin{proof}
The existence, uniqueness, and stability of the discrete solution can be obtained by showing that there exist positive constants $\alpha$ and $\beta$, independent of $h_\Omega$ and $h_\gamma$, such that the ellipticity on the kernel and inf-sup condition hold true at the discrete level~\cite{bbf}.
Since $Q_h\subset L^2(\gamma)$, we have that the discrete kernel $\mathcal{K}_h=\{v_h\in V_h: \langle q_h,v_h\rangle=0\, \forall q_h\in Q_h\}$ contains element with $\int_\gamma v_h ds=0$. Hence, the Poincar\'e inequality (see~\cite[(5.3.3)]{BrennerScott})
\[
\|v_h\|_{1,\Omega}\le {\rm C}_\Omega\left(\left|\int_\gamma v_h ds\right|+\|\nabla v_h\|_{0,\Omega}\right)=
{\rm C}_\Omega\|\nabla v_h\|_{0,\Omega},
\]
implies that
\[
(\nabla v_h,\nabla v_h)_\Omega\ge \alpha \|v_h\|^2_{1,\Omega}\quad \forall v_h\in\mathcal{K}_h. 
\]
The discrete inf-sup condition 
\[
\inf_{q_h\in Q_h}\sup_{v_h\in V_h}
\frac{\langle q_h,v_h\rangle}{\|q_h\|_{-1/2,\gamma}\|v_h\|_{1,\Omega}}\ge\beta,
\]
is more involved and makes use of the continuous inf-sup, together with Cl\'ement interpolation, trace theorem and inverse inequality. The interested reader can find the main arguments of this proof in~\cite[sect. 5]{BG_numermath}.

Thanks to the above conditions, the theory on the approximation of saddle point problems gives both existence and uniqueness of the solution of Problem~\eqref{eqn:LM_algebraic_1}-\eqref{eqn:LM_algebraic_2} satisfying the a priori estimate
\[
\|u_h\|_{1,\Omega}+\|\lambda_h\|_{-1/2,\gamma}\le
{\rm C}(\|f\|_{0,\Omega}+\|g\|_{1/2,\gamma}),
\]
and the error estimate~\eqref{eq:err_estimate}.
\end{proof}

Given basis functions  $ \{ v_i\}_{i=1}^N$ and $\{ q_i\}_{i=1}^M$ such that $V_h \coloneqq \text{span} \{ v_i\}_{i=1}^N$ and $Q_h \coloneqq \text{span} \{ q_i\}_{i=1}^M$, we have that \eqref{eqn:LM_algebraic_1}, \eqref{eqn:LM_algebraic_2} can be written as the following algebraic problem
\begin{eqnarray}
\label{eqn:LM_linear_system}
  \left(\begin{array}{cc}
    A & C^\top \\ C & 0
  \end{array}\right)
  \left(\begin{array}{cc}
    U \\ \lambda
  \end{array}\right)
  =
  \left(\begin{array}{cc}
    F \\ G
  \end{array}\right)
\end{eqnarray}
where
\begin{eqnarray*}
A_{ij} &=& (\nabla v_j, \nabla v_i)_{\Omega}   \qquad i,j=1,\dots,N \\
C_{\alpha j} &=& \langle q_\alpha,v_j \rangle_{\gamma}  \qquad j=1,\dots,N, \alpha = 1,\dots, M \\
F_{i} &=& (f, v_i)_{\Omega}   \qquad i=1,\dots,N \\
G_{\alpha} &=& \langle  q_\alpha, g \rangle_{\gamma} \qquad \alpha = 1,\dots, M.
\end{eqnarray*}

To solve the block linear system \eqref{eqn:LM_linear_system} we use Krylov subspace iterative methods applied to the Schur complement system:
\begin{align}
    \lambda = S^{-1}(C A^{-1} F-G), \\
    U = A^{-1} (F - C^\top \lambda),
\end{align} where $S \coloneqq C A^{-1} C^\top$, and we use  $CAC^\top + M$ as preconditioner for $S$, where $M$ is the immersed boundary mass matrix with entries $(M)_{ij} = \langle q_j, q_i \rangle_{\gamma}$.

\begin{figure}[ht]
    \centering
    \includegraphics{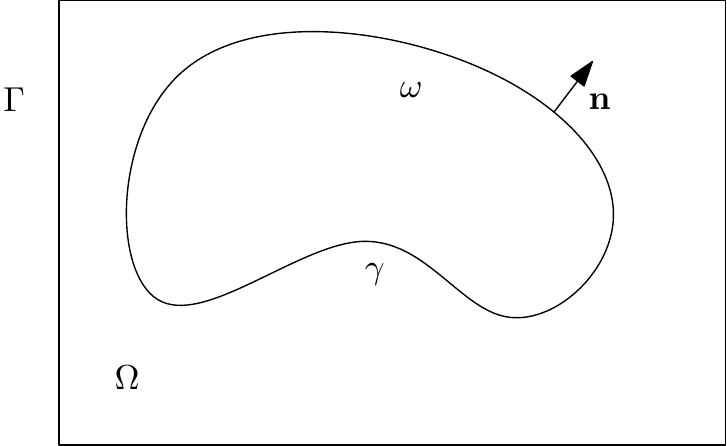}
    \caption{Model problem setting, with immersed domain $\omega$, immersed boundary $\gamma$, and background domain $\Omega$.}
    \label{fig:domains}
\end{figure}

We next show how the Lagrange multiplier formulation is directly linked to a penalization approach used to impose the Dirichlet condition $u=g$ on the internal curve $\gamma$, by locally eliminating the multiplier in the same spirit of the work by Stenberg~\cite{Stenberg}.

\subsection{The method of Nitsche}

Instead of enforcing the constraint on $\gamma$ with a multiplier, it is possible to impose it weakly through a penalization approach following the so-called method of Nitsche. Here we show that the enforcement of boundary conditions via Nitsche's method can be derived from a \emph{stabilized} Lagrange multiplier method by adding a consistent term that penalizes the distance between the discrete multiplier $\lambda_h$ and the normal derivative~\cite{Stenberg}. With this in mind, we penalize the \textit{jump} of the normal derivative along the internal curve $\gamma$ to impose the constraint $u=g$. The consistency here follows from the observation that at the continuous level the multiplier is the jump of the normal derivative on the interface. 

We define $h(\boldsymbol{x})$ as the piecewise constant function describing the mesh-size of $\gamma$ and we choose the discrete spaces $V_h$ and $Q_h$ as in the Lagrange multiplier case.

Adding the normal gradient penalization term to the Lagrange multiplier formulation~\eqref{eqn:LM_algebraic_1}-\eqref{eqn:LM_algebraic_2} leads to the problem of seeking a pair $(u_h,\lambda_h) \in V_h \times Q_h$ such that
\begin{eqnarray*}
\label{eqn:LMStabilized}
(\nabla u_h, \nabla v_h)_{\Omega} + \langle \lambda_h, v_h\rangle_{\gamma} 
+\frac1{\beta}\langle \llbracket  \nabla v_h\cdot\bfn \rrbracket ,h (\lambda_h + \llbracket  \nabla u_h\cdot\bfn \rrbracket ) \rangle_{\gamma}
&=& ( f,v_h)_{\Omega} \qquad \forall v_h \in V_h, \\
\label{eqn:LMStabilized2}
\langle q_h,u_h \rangle_{\gamma} - \frac{1}{\beta} \langle q_h , h (\lambda_h + \llbracket \nabla u_h\cdot\bfn \rrbracket ) \rangle_{\gamma} &=& \langle q_h,g\rangle_{\gamma} \qquad \forall q_h \in Q_h,
\end{eqnarray*} 
where $\beta$ is a positive penalty parameter. In the discrete setting, $\langle q, v\rangle_\gamma$ is identified with the scalar product in $L^2(\gamma)$ for $q\in Q_h$ and $v\in V_h$
and we use the notation $\langle q,v\rangle_{\gamma}=\sum_{\cellg\in\gamma_h} (q,v)_\cellg.$
The second equation gives 
\[ 
\langle q_h,  u_h - \frac{1}{\beta}  h (\lambda_h + \llbracket  \nabla u_h\cdot\bfn \rrbracket  ) -g \rangle_{\gamma} =0 
\qquad \forall q_h \in Q_h,
\]
and, introducing the $L^2$-projection on $Q_h$ as $\Pi_h: L^2(\gamma) \rightarrow Q_h$, we can eliminate the multiplier locally on each element $\cellg \in \gamma_h$:
\[\lambda_h|_\cellg = -\left(\Pi_h \llbracket \nabla u_h\cdot\bfn \rrbracket \right)_{\cellg} + \beta h_\cellg^{-1} \left(\Pi_h(u_h -g)\right)_{\cellg}.\]

We observe that it is possible to formally refine $\gamma_h$ to $\gamma'_h$ such that the element boundaries of $\Omega_h$ coincide with some element boundaries of the immersed grid $\gamma_h'$. If we now choose a space $Q'_h$ which contains piece wise polynomials of degree compatible with that of the elements in $V_h$, we can avoid the projection operator altogether, and are allowed to write
\[
\lambda'_h= -\llbracket  \nabla u_h\cdot\bfn \rrbracket  + \beta h^{-1} (u_h -g).
\]
Inserting this back into the first equation we get 
the variational problem (in which no multiplier is involved) of finding $u_h \in V_h$ such that 
\begin{equation}
\label{eqn:penalisation}
\aligned
    (\nabla u_h, \nabla v_h)_{\Omega} &- \langle \llbracket \nabla u_h\cdot\bfn \rrbracket , v_h \rangle_{\gamma} - \langle \llbracket \nabla v_h\cdot\bfn \rrbracket , u_h \rangle_{\gamma} + \beta  \langle h^{-1}u_h,v_h \rangle_{\gamma}\\ 
    &= (f,v_h)_{\Omega} - \langle \llbracket \nabla v_h\cdot\bfn \rrbracket ,g \rangle_{\gamma} + \beta  \langle h^{-1}g,v_h \rangle_{\gamma},
\endaligned
\end{equation} 
holds for every $v_h \in V_h$. 

Equation~\eqref{eqn:penalisation} represents the Nitsche method~\cite{Nitsche} applied to Problem~\eqref{problem}. Owing to the non-matching nature of our discretization, the immersed and background mesh facets are not expected to be  aligned in general. If indeed for all facets (elements) $\cellg$ of $\gamma$ and facets $\faceO$ of $\Omega_h$ we have $\mathcal{H}^{d-1}(\cellg \cap \faceO)=0$, with $\mathcal{H}^{d-1}$ denoting the Hausdorff measure in $d-1$-dimension, then 
the variational problem can be simplified since all the jump terms vanish.

\subsection{The cut-FEM method}
\label{subsec:cut-FEM}

When using the cut-FEM discretization approach, one changes the perspective of the original variational problem, which is no longer solved on a single space defined globally on $\Omega$: instead, one solves two separate problems on the two domains $\omega$ and $\Omega\setminus \omega$. This approach gives additional flexibility on the type of problems that can be solved. For example, problems with more general transmission conditions across $\gamma$ where the solution $u$ is allowed to jump. Moreover, separating the problem transforms the cut-FEM method into a boundary-fitted approach, where the approximation space is changed to resolve the interface.

The usual approach to impose constraints on $\gamma$ in the cut-FEM method is to use Nitsche's method applied on the two subdomains separately:
\[
\begin{cases}
\Omega^1 \coloneqq \omega, \\
\Omega^2 \coloneqq \Omega \setminus \Bar{\omega}.
\end{cases}
\]
In this context, it is necessary to take special care of those elements of $\Omega_h$ that are cut by $\gamma$. First, we introduce the corresponding computational meshes $\Omega_h^i$ given by
\[
\Omega_h^i \coloneqq \{\cellO \in \Omega_h: \cellO \cap \Omega_i \ne \emptyset  \} \qquad i=1,2,
\] 
and notice that both meshes share the set of cells intersected by the curve $\gamma$, namely
\[
\tau \coloneqq \{\cellO \in \Omega_h: \cellO \cap \gamma \ne \emptyset \}, \qquad \tau^i \coloneqq \{ \tilde{\cellO}^i\coloneqq \cellO\cap\Omega^i, \ \cellO \in \tau \},
\]
where we distinguish between entire cells that intersect $\gamma$ (these are in the set $\tau$) and cut cells (with arbitrary polytopal shape, which are in the set $\tau^i$), i.e., $(\cellO\in \tau) = (\tilde{\cellO}^1\in\tau^1)\cup(\tilde{\cellO}^2\in\tau^2)\cup (\gamma\cap \cellO)$ with $\tilde{\cellO}^1\cap\tilde{\cellO}^2 = \emptyset$.

When defining a finite element space on these elements, one uses the same definition of the original finite element space defined on entire elements $\cellO \in \tau$, which is then duplicated and restricted to the corresponding domain, i.e., one introduces on $\Omega_h^i$, $i=1,2$ the discrete spaces
\[
V_h^{i} \coloneqq V_h(\Omega_h^i)|_{\Omega^i}=\{v_h|_{\Omega^i},\ v_h\in V_h(\Omega_h^i)\} .
\]
Then, applying twice Nitsche's method requires to find $u_h^i \in V_h^i$ such that $$a_h^i(u_h^i,v_h^i) = l_h^i(v_h^i) \qquad \forall v_h^i \in V_h^i \qquad i=1,2, $$ with
\begin{equation}
    a_h^i(u_h^i,v_h^i) = (\nabla u_h^i, \nabla v_h^i)_{\Omega_h^i\cap \Omega^i} - \langle \llbracket\nabla u_h^i\cdot\bfn\rrbracket, v_h^i\rangle_{\gamma} - \langle u_h^i, \llbracket\nabla v_h^i\cdot\bfn\rrbracket \rangle_{\gamma} + \frac{\beta_1}{h} \langle  u_h^i, v_h^i\rangle_{\gamma} ,
\end{equation}
and 
\begin{equation}
    l_h^i(v_h^i) = (f,v_h^i)_{\Omega_h^i\cap \Omega^i} + \left\langle g,\frac{\beta_1}{h}v_h^i - \llbracket\nabla v_h^i\cdot\bfn\rrbracket \right\rangle_{\gamma} .
\end{equation}
This formulation is known to suffer from the so called \textit{small-cut} problem deriving from the fact that the size of the cuts $\cellO \cap \Omega^i$ cannot be controlled and hence can be arbitrarily small. This may result in a loss of coercivity for the bilinear forms $a_h^i$ when the size of a cut cell goes to zero. As shown in \cite{cutFEM}, the formulation can be stabilized by adding to the bilinear form the following penalty term acting on the interior or exterior faces of the intersected cells, depending on the domain $\Omega_h^i$: 
\begin{align}
    \mathcal{G}_h^i \coloneqq \{ \faceO = \overline{\cellO}_{+} \cap \overline{\cellO}_{-}: \cellO_{+} \in \tau, \cellO_{-} \in \Omega_h^i \} \qquad i=1,2,
\end{align}
\[
j_h^i(u,v) \coloneqq \beta_2 \sum_{\faceO \in \mathcal{G}_h^i} \langle h_\faceO \llbracket \nabla u\cdot\bfn \rrbracket ,\llbracket \nabla v\cdot\bfn \rrbracket \rangle_{\faceO},\]
where $\beta_2 $ is a positive penalty parameter and $h_\faceO$ the size of $\faceO \in \mathcal{G}_h^i$. With such definition at hand  we set  
\begin{align}
V_h &\coloneqq V_h^{1} + V_h^{2}, \text{ with elements } v_h=\begin{cases}v^1_h &\text{ in }\Omega^1\\v^2_h &\text{ in }\Omega^2\end{cases} \\
a_h(u_h,v_h) &\coloneqq \sum_{i=1}^2 (a_h^i(u_h^i,v_h^i) + j_h^i(u_h^i,v_h^i)),\\ 
l_h(v_h) &\coloneqq \sum_{i=1}^2 l_h^i(v_h^i), 
\end{align}
and the method reads: find $u_h \in V_h$ such that
\[
a_h(u_h,v_h) = l_h(v_h) \qquad \forall v_h \in V_h.
\] 

\section{Integration of coupling terms}
\label{sec:integration}
In all three methods, some terms need to be  integrated over the non-matching interface $\gamma$. For example, in the Lagrange multiplier method, we need to compute $\langle \lambda_h, v_h \rangle_{\gamma}$ where $v_h \in V_h$ and $\lambda_h \in Q_h$, while in the Nitsche's interface penalization method and in the cut-FEM method, one needs to integrate terms of the kind $\langle \beta h^{-1} u_h, v_h\rangle_\gamma$, where both $u_h$ and $v_h$ belong to $V_h$, but the integral is taken over $\gamma$.

We start by focusing our attention on the term $\langle \lambda_h, v_h
\rangle_{\gamma}$ where $v_h \in V_h$ and $\lambda_h \in Q_h$. This is delicate
to assemble as it is the product of trial and test functions living on
\emph{different} meshes and it encodes the interaction between the two grids.
Let $\cellg \in \gamma_h$ and $F_\cellg$ be the map $F_\cellg: \hat{\cellg}
\rightarrow \cellg$ from the reference immersed cell $ \hat{\cellg}$ to the
physical cell $\cellg$, $JF_{\cellg}(\boldsymbol{x})$ the determinant of its
Jacobian and assume to have a quadrature rule $\{ \hat{\boldsymbol{x}}_q, w_q
\}_{q=1}^{N_q}$ on $\hat{\cellg}$. Since the discrete functions are piecewise polynomials, we can use the scalar product in $L^2(\gamma)$ instead of the duality pairing, then standard finite element assembly reads:

\begin{equation}
\label{eqn:standardfem}
    \langle \lambda_h,v_h \rangle_{\gamma} = \sum_{\cellg \in \gamma_h} \langle \lambda_h, v_h \rangle_{\cellg}.
\end{equation}

In the forthcoming subsections we discuss three strategies to compute this
integral. Identical considerations apply in order to compute the term  $\beta
\langle h^{-1}u_h,v_h \rangle_{\gamma}$ in \ref{eqn:penalisation}.

In general, such integrals are always computed using quadrature formulas. What
changes is the algorithm that is used to compute these formulas, and the
resulting accuracy. Independently on the strategy that is used to compute the
quadrature formulas, all algorithms require the efficient identification of
pairs of potentially overlapping cells, or to identify background cells where
quadrature points may fall. This task can be performed efficiently by using
queries to R-trees of axis-aligned bounding boxes of cells for both the
background and immersed mesh.

An R-tree is a data structure commonly used for spatial indexing of
multi-dimensional data that relies on organizing objects (e.g., points, lines,
polygons, or bounding boxes) in a hierarchical manner based on their spatial
extents, such that objects that are close to each other in space are likely to
be located near each other in the tree.

In an R-tree, each node corresponds to a rectangular region that encloses a group of objects, and the root node encloses all the objects. Each non-leaf node in the tree has a fixed number of child nodes, and each leaf node contains a fixed number of objects. R-trees support efficient spatial queries such as range queries, nearest neighbor queries, and spatial joins by quickly pruning parts of the tree that do not satisfy the query constraints.

In particular, we build two R-tree data structures to hold the bounding boxes of
every cell of both the background and the immersed meshes. Spatial queries are
performed traversing the R-tree structure generated by the
\texttt{Boost.Geometry} library \cite{BoostLibrary}. The construction of an
R-tree with $M$ objects has a computational cost that is proportional to $O(M
\log(M))$, while the cost of a single query is $O(\log(M))$.

\subsection{Integration driven by the immersed mesh}

\label{subsec:mesh_driven_integration}
Applying straightforwardly a given quadrature rule defined over $\gamma$ to equation~\eqref{eqn:standardfem} gives:
\begin{equation}\label{eq:naive}
\langle \lambda_h, v_h\rangle_{\gamma} \approx \sum_{\cellg \in \gamma_h} \sum_{q=0}^{N_q} \lambda_h \bigl( F_\cellg(\hat{\boldsymbol{x}}_q) \bigr) v_h  \bigl( F_\cellg(\hat{\boldsymbol{x}}_q) \bigr) JF_\cellg(\hat{\boldsymbol{x}}_q) w_q,
\end{equation}
with respect to some reference quadrature points $\hat{\boldsymbol{x}}_q$ and
weights $w_q$, letting the immersed domain drive the integration. In this case
the computational complexity stems from the evaluation of the terms
$v_h(F_\cellg(\hat{\boldsymbol{x}}_q))$, since the position within the
background mesh $\Omega_h$ of the quadrature point
$F_\cellg(\hat{\boldsymbol{x}}_q)$ is not known \textit{a-priori} (see
Figure~\ref{fig:Mesh_intersection_DoF_1D_2D}). A possible algorithm for the
evaluation of $v_h(F_\cellg(\hat{\boldsymbol{x}}_q))$ can be summarized as
follows:
\begin{itemize}
    \item Compute the physical point $\boldsymbol{y} = F_\cellg(\hat{\boldsymbol{x}}_q)$;
    \item Find the cell $\cellO\in\Omega_h$ s.t. $\boldsymbol{y} \in \cellO$;
    \item Given the shape function $\hat{v}_h(\boldsymbol{\hat{x}})$ in the reference element $\hat{\cellO}$, compute $ \hat{v_h} \bigl(G_\cellO^{-1}(\boldsymbol{y}) \bigr)$, where  $G_\cellO : \hat{\cellO} \rightarrow \cellO$ denotes the reference map associated to $\cellO \in \Omega_h$ for the background domain. 
\end{itemize}
The first part of the computation takes linear time in the number of cells of
the immersed mesh $\gamma_h$, while the second part requires a computational
cost that scales logarithmically with the number of cells of the background
grid \emph{for each quadrature point}. Finally, the evaluation of the inverse
mapping $G^{-1}_{\cellO}$ requires Newton-like methods for general unstructured
meshes or higher order mappings (i.e., whenever $F_K$ is non-affine). Overall, implementations based on R-tree
traversal will result in a computational cost that scales at least as $O((M +
N)\log(N))$, where $M$ is the total number of quadrature points (proportional to
the number of cells of the immersed mesh), and $N$ is the number of cells of the
background mesh.

\begin{figure}[ht]
    \centering
    \includegraphics{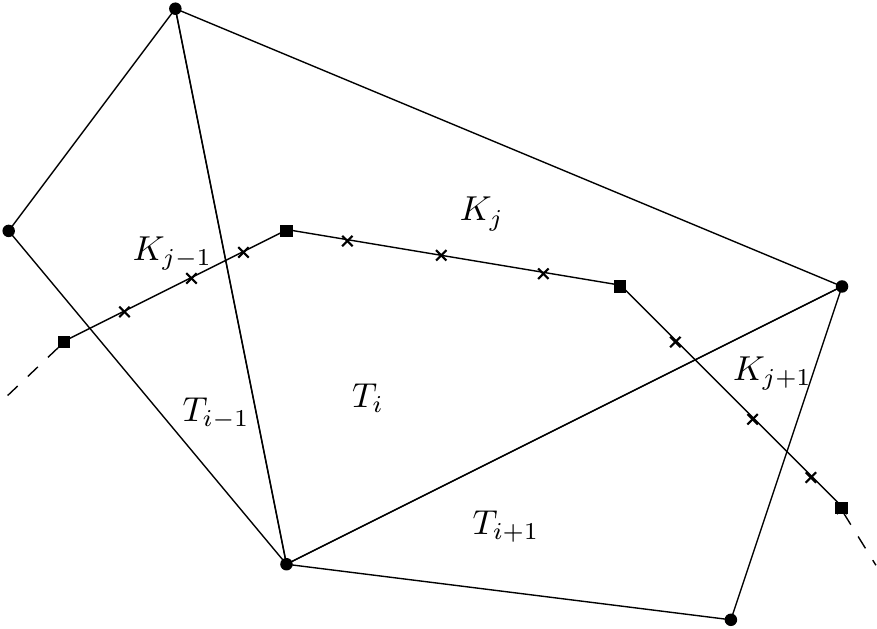}
    \caption{Squares: DoF for linear basis functions attached to some elements $K_j$ of $\gamma$ intersecting a background element $T_i \in \Omega_h$.
    Dots: DoF for a linear Lagrangian basis attached to cells $T_i$. Crosses: quadrature points corresponding to a Gaussian quadrature rule of order 3 on elements of $\gamma$.}
    \label{fig:Mesh_intersection_DoF_1D_2D}
\end{figure}

The main drawbacks of this approach are twofold: i) on one side it does not
yield the required accuracy even if quadrature rules with the appropriate order
are used, due to the piecewise polynomial nature of the integrands, that may
have discontinuities within the element $K$, and ii) since the couplings between
the two grids is based solely on a collection of quadrature points on the
immersed surface, it may happen that two elements overlap, but no quadrature
points fall within the intersection of the two elements. 

This is illustrated in a pathological case in
Figure~\ref{fig:kinks_intersections} (top), where we show a zoom-in of a
solution computed with an insufficient number of quadrature points, and an overly
refined background grid. In this case the quadrature rule behaves like a
collection of Dirac delta distributions, and since the resolution of the
background grid is much finer than the resolution of the immersed grid, one can
recognize in the computed solution the superposition of many small fundamental
solutions, that, in the two-dimensional case, behave like many logarithmic
functions centered at the quadrature points of $\gamma_h$.

\begin{figure}[ht]
    \centering
    \subfloat[\centering Underresolved mesh-driven integration]{{\includegraphics[width=\textwidth]{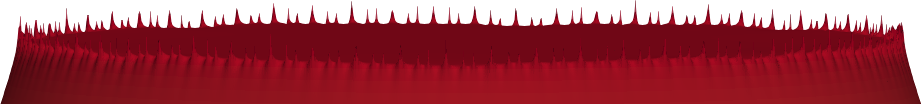} }}
    
    \subfloat[\centering Integration using mesh intersection]{{\includegraphics[width=\textwidth]{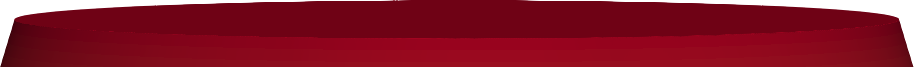} }}%
    \caption{Comparison between exact and non exact integration (zoomed-in on the
    interface in order to highlight the kinks) for a pathological case. (a) Spurious kinks around the interface,
    on the location of the immersed quadrature points. (b) Well-resolved
    solution around the interface.}%
    \label{fig:kinks_intersections}
\end{figure}

These issues with the quadrature driven approach may hinder the convergence
properties of the methods, as shown in \cite{Boffi_Credali_Gastaldi} for a 2D-2D
problem, and require a careful equilibrium between the resolution of the
immersed grid $\gamma_h$, the choice of the quadrature formula on $K$, and the
resolution of the background grid $\Omega_h$. Alternatively, one can follow a
different approach that is based on the identification of the intersections
between the two grids, and on the use of a quadrature rules defined on the
intersection of the two elements, to remove the artifacts discussed above (at
the cost of computing the intersection between two non-matching grids), as shown
in Figure~\ref{fig:kinks_intersections} (bottom).

\subsection{Integration on mesh intersections}
\label{subsec:mesh_intersection_integration}
An accurate computation of the interface terms may be performed by taking into account the intersection between the two grids. First, the non-empty intersections $\TcapK \coloneqq \cellO \cap \cellg \ne \emptyset$ between any $\cellO \in \Omega_h$ and $\cellg \in \gamma_h$ are identified and the intersection is computed accordingly.
Then, given that the restriction of  $v_h$ to $\TcapK$ is smooth, a suitable quadrature formula can be applied in $\TcapK$. Since $\TcapK$ is a polygon in general, we use a sub-tessellation technique, consisting in splitting $\TcapK$ into sub-elements $S\in S_{\TcapK}$ such that $\TcapK=\bigcup_{S\in S_{\TcapK}} S$ and  we use standard Gaussian quadrature rules on each of these sub-elements. See Figures~\ref{fig:Mesh_intersection_1D_2D} and \ref{fig:Mesh_intersection_2D_3D} for an example in 
two and three dimensions, respectively.

In conclusion, interface terms such as $\langle \lambda_h, v_h \rangle_{\gamma}$ are assembled by summing the contribution of each intersection $\TcapK$ computed by appropriate quadrature:
\begin{align}
\label{eqn:composite_quad}
    \langle \lambda_h, v_h \rangle_{\TcapK} = \sum_{S \in S_{\TcapK}} \langle \lambda_h, v_h \rangle_{S} \approx  \sum_{S \in S_{\TcapK}} \sum_{q=1}^{N_q} \lambda_h(F_S(\hat{\boldsymbol{{x}}}_q)) v_h(F_S(\hat{\boldsymbol{x}}_q)) JF_S(\hat{\boldsymbol{x}}_q) w_{q},    
\end{align}
where $F_S \colon \hat{\cellg} \rightarrow S_S$ is the mapping from the reference element $\hat{\cellg}$ to $S$ and $\{ \hat{\boldsymbol{x}}_q, w_q \}_{q=1}^{N_q}$ a suitable quadrature rule defined on $\hat{\cellg}$.

The efficient identification of pairs of potentially overlapping cells is
performed using queries to R-trees of axis-aligned bounding boxes of cells for
both the background and immersed mesh. While this allows to record the indices
of the entries to be allocated during the assembly procedure and the relative
mesh iterators, the actual computation of the \emph{geometric} intersection
$\TcapK$ between two elements $\cellO \subset \Omega_h$ and $\cellg \subset
\gamma$ is computed using the free function \texttt{CGAL::intersection()}. The
resulting polytope is sub-tessellated into simplices, i.e., $\TcapK =
\cup_{i=0}^{N_s} S_i $, even though other techniques for numerical quadrature on
polygons may be employed~\cite{Cangiani_PolyDG}.

In Figures \ref{fig:Mesh_intersection_1D_2D} and \ref{fig:Mesh_intersection_2D_3D} we show this procedure graphically for cells $\cellO$ and $\cellg$ and provide an example for the corresponding sub-tessellations of the intersection $\TcapK$ in two and three dimensions. The resulting sub-tesselations are used to construct the quadrature rules $\{Q_i\}_i$ in \eqref{eqn:composite_quad}.

The overall complexity of the algorithm is $O((N+M) \log(N))$, where $N$ and $M$
are the numbers of cells in the background and immersed mesh, respectively.
Notice, however, that the complexity of the algorithm should be multiplied by
the complexity of the \texttt{CGAL::intersection()} function, which is $O(nm)$
for the intersection of two polygons with $n$ and $m$ vertices, respectively. In
practice, this is not a problem since the number of vertices of the polygons is
typically small, and the complexity of the algorithm is dominated by the
complexity of the R-tree queries, but these costs are non-negligible for large
grids, and many overlapping cells (see, e.g., the discussion
in~\cite{Boffi_Credali_Gastaldi}).

\begin{figure}[ht]
    \centering
    \subfloat[\centering Background cell $T$ and foreground cell $K$.]{{\includegraphics[page=1]{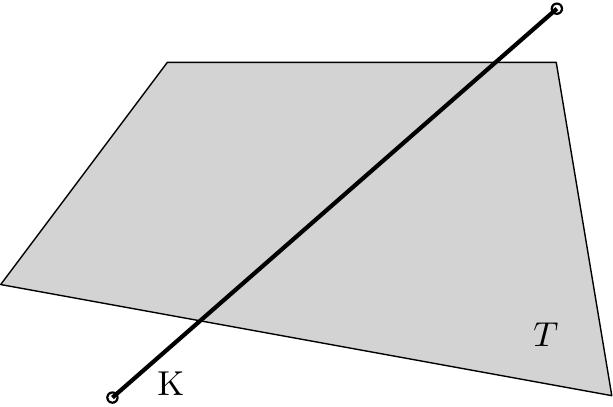} }}%
    \qquad
    \subfloat[\centering The intersection $\TcapK$ is computed and triangulated. In this case, resulting in a single one-dimensional simplex (a segment).]{{\includegraphics[page=2]{fig/intersection_2d.pdf}}}%
    \caption{Triangulation of the intersected region $\TcapK$ for a line immersed in 2D. The one-dimensional cell $K$ is allowed to be positioned arbitrarily w.r.t. to the two-dimensional quadrilateral $T$.}%
    \label{fig:Mesh_intersection_1D_2D}
\end{figure}

\begin{figure}[ht]
    \centering
    \subfloat[\centering In grey the background cell, in green the immersed one.]{{\includegraphics[page=1]{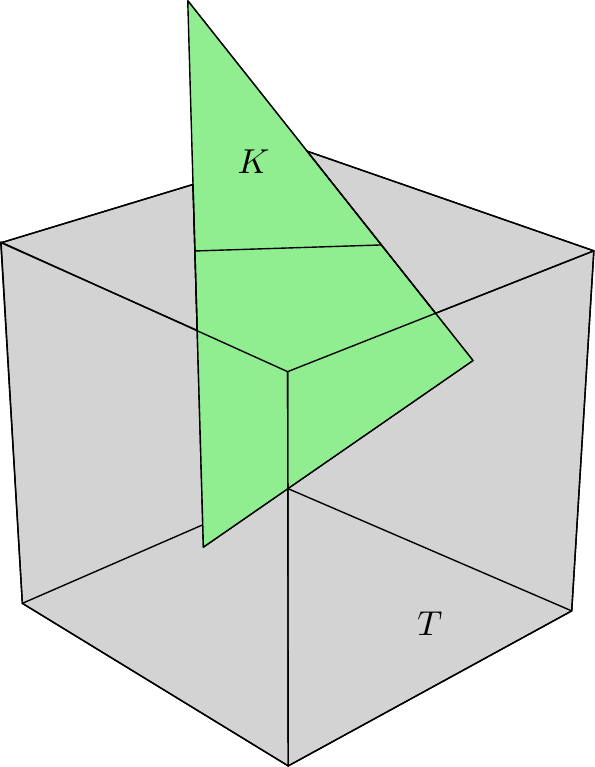} }}%
    \qquad
    \subfloat[\centering The intersection $\TcapK$ is computed and triangulated. ]{{\includegraphics[page=2]{fig/intersection_3d.pdf} }}%
    \caption{Triangulation of the intersected region $\TcapK$ for a triangle cutting a square and the relative sub-tessellation.}%
    \label{fig:Mesh_intersection_2D_3D}
\end{figure}

\subsection{Integration through level set splitting}
\label{sec:level_set_splitting}

If a level set description of the immersed domain is available, this may be used to generate quadrature formulas without explicitly computing the geometric intersection (see \cite{cutFEM} for some implementation details). 
It has also been shown in \cite{Saye} how to generate quadrature rules on different regions of a cut element using $\Psi$, identified as
\[
\aligned
    &O = \{ (x,y) \in \cellO: \Psi >0 \},\\
    &S = \{ (x,y) \in \cellO: \Psi =0 \},\\
    &I = \{ (x,y) \in \cellO: \Psi <0 \},
\endaligned
\]
where $\Psi : \mathbb{R}^d \rightarrow \mathbb{R}$ is the level set function determining the immersed domain. A typical configuration including quadrature points for each entity is shown in Figure~\ref{fig:Quadratures_cutFEM}.

\begin{figure}[ht]
    \centering
    \subfloat[]{{\includegraphics[page=1]{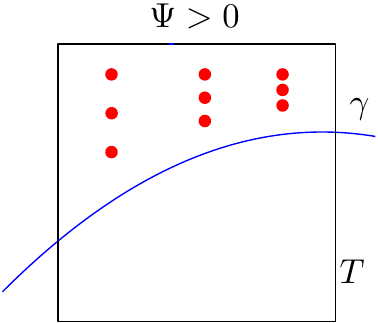}}}%
    \qquad
    \subfloat[]{{\includegraphics[page=2]{fig/quad_psi.pdf}}}%
    \qquad
    \subfloat[]{{\includegraphics[page=3]{fig/quad_psi.pdf}}}%
    \caption{Quadrature points distributed on $O$, $S$ and $I$ for a cell $\cellO$ cut by the curve $\gamma$.}%
    \label{fig:Quadratures_cutFEM}
\end{figure}

In practical computations, however, the interface $\gamma$ is not available in
terms of a simple analytical description of the level set function. Should one
still wish to use quadrature formulas based on a level set, a discrete (possibly
approximated) level set function $\Psi_h$ that is zero on $\gamma_h$ must be
provided when $\gamma_h$ is a triangulated surface. Such level set would also
allow a robust partitioning of the background computational mesh into cells that
are completely inside $\omega$, cells cut by $\gamma$, and cells that are
completely inside $\Omega\setminus\omega$.

We propose a simple implementation of a discrete level set function $\Psi_h$
constructed from a triangulated interface $\gamma_h$. Point classification (i.e.,
detecting if a point is inside or outside $\omega$) is performed using a query
to the \texttt{CGAL} library, to detect if a point is inside or outside the
coarsest simplicial mesh bounded by $\gamma_h$ (denoted by $\mathcal{I}_h$).

We then define a \textit{discrete} level set function $\Psi_h(\boldsymbol{x})$ on top of $\gamma_h$ as follows:
\[
\Psi_h(\boldsymbol{p}) 
=
\begin{cases}
\label{eqn:discrete_signed_distance}
    -d(\boldsymbol{p},\gamma) &  \text{if } \boldsymbol{p} \in \mathcal{I}_h, \\
    d(\boldsymbol{p},\gamma) & \text{if } \boldsymbol{p} \notin \mathcal{I}_h, \\
\end{cases}
\]
where $d(\boldsymbol{p},\gamma) \coloneqq \min_{\boldsymbol{y} \in \gamma} d(\boldsymbol{p},\boldsymbol{y})$ is constructed by first finding the closest elements of $\gamma$ to $\boldsymbol{p}$ using efficient R-tree data structures indexing the cells of the immersed triangulation, and then computing the distance between $\boldsymbol{p}$ and those elements.

This procedure is summarized in Algorithm \ref{alg:discrete_level_set}. We
validate our approach with a manufactured case by choosing randomly distributed
points $\{p_i\}_i$ in the interval $[-1,1]^2$ and computing the relative error
$E_r(\boldsymbol{p}) \coloneqq \frac{|\Psi(\boldsymbol{p}) -
\Psi_h(\boldsymbol{p}) |}{|\Psi(\boldsymbol{p})|}$ for a level set describing a
circle of radius $R=0.3$, and a corresponding approximated grid $\gamma_h$. 

In Figure \ref{fig:Relative_error_discrete_level_set} we report the relative
error committed by replacing the exact level set with the discrete one (induced
by replacing the exact curve with a triangulated one). The initial
discretization of $\gamma$ is chosen so that the error is below $10^{-6}$
everywhere, and we can safely neglect it when computing convergence rates of the error computed w.r.t. exact solutions known on the analytical level set.

As long as such geometrical error is not dominating we can observe optimal rates
in the numerical experiments for cut-FEM. At the same time, this setting allows
for a fair comparison between all the schemes. We expect that other practical
implementations would require similar tasks and hence that the observed
computational cost is representative.

\begin{figure}[ht]
    \centering
    \includegraphics{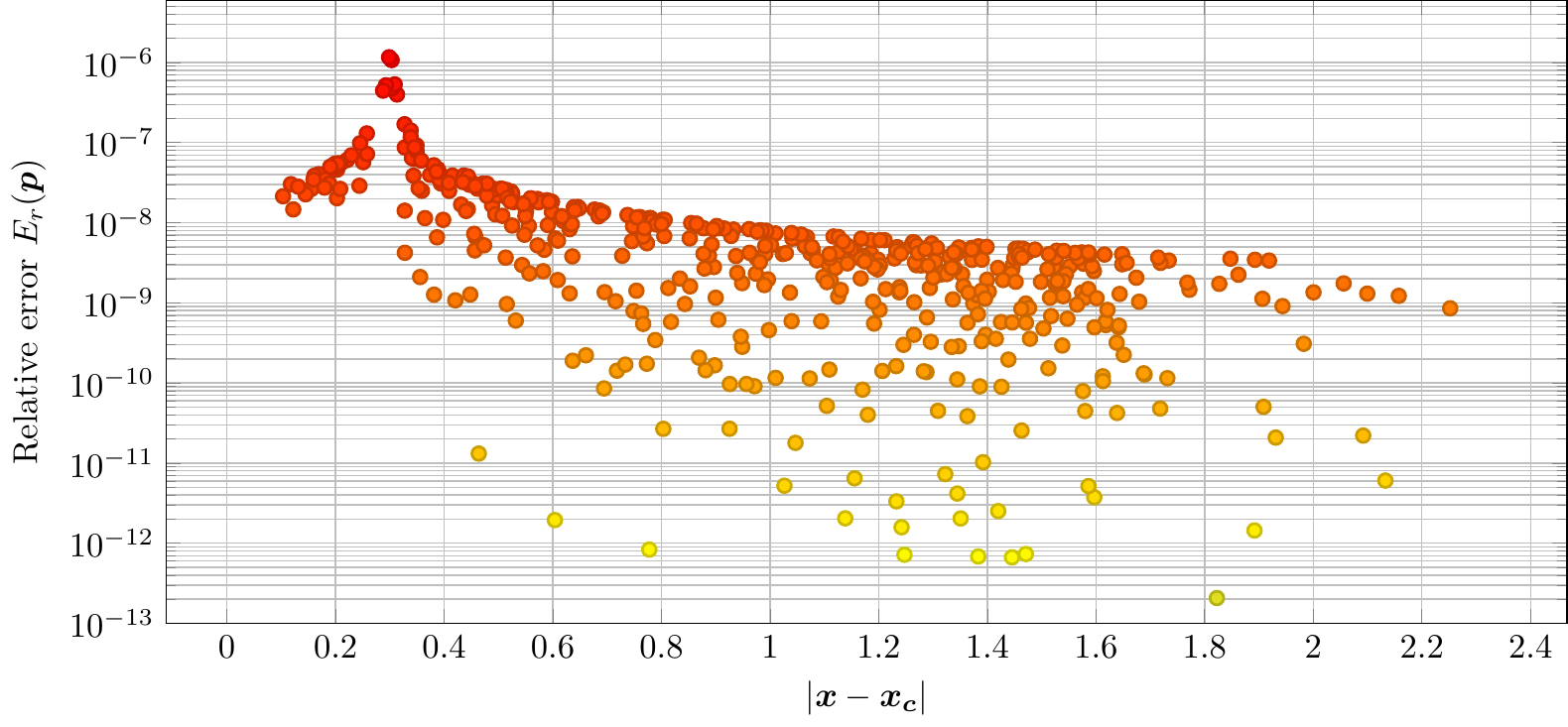}

    \caption{The relative error for the discrete level set describing a disk of radius $R=0.3$ as a function of the distance from the center $\boldsymbol{x_c} = (\frac{1}{2},\frac{1}{2})$.}
    \label{fig:Relative_error_discrete_level_set}
\end{figure}

\begin{algorithm}[H]
    \SetKwInOut{Input}{Input}
    \SetKwInOut{Output}{Output}
    \Input{$\gamma$ polygonal surface mesh, 
    \\ $\mathcal{I}_h$ coarse triangulation for the interior of $\gamma$,
    \\ $\boldsymbol{p} \in \Omega$.}
    \Output{$d(\boldsymbol{p},\gamma)$.}
    \eIf{$\boldsymbol{p} \in \mathcal{I}_h$}
      {
        $s \gets -1$
      }
      {
        $s \gets +1$
      }
    $\text{Find} \{\cellg_i\} \in \gamma_h \text{ nearest to } \boldsymbol{p}$. \\
    \For{$\cellg \in \{\cellg_i\}_i$}{
    $d_i \gets d(\boldsymbol{p},\cellg)$;
    }
    $\text{Return }s \cdot \min_i d_i$
    \caption{Evaluation of the discrete level set \eqref{eqn:discrete_signed_distance} for a given mesh $\gamma$.}
    \label{alg:discrete_level_set}
\end{algorithm}

\newpage
\section{Numerical experiments}
\label{sec:numberical_experiments}

Our implementation is based on the C++ finite
element library \texttt{deal.II} \cite{dealII94,dealIIdesign}, providing a
dimension independent user interface. The implementation of the Lagrange
multiplier and of the Nitsche's interface penalization methods are adapted from
the tutorial programs \texttt{step-60} and \texttt{step-70} of the deal.II
library, respectively, while the cut-FEM algorithm is adapted from the tutorial
program \texttt{step-85}, developed in~\cite{Sticko2016}.

As a result of this work, we added support and wrappers for the C++ library
\texttt{CGAL} (\cite{cgal:eb-22b},~\cite{CGALKernel}) into the \texttt{deal.II}
library~\cite{dealII94}, in order to perform most of the computational geometry
related tasks. Thanks to the so called \textit{exact computation paradigm}
provided by \texttt{CGAL}, which relies on computing with numbers of arbitrary
precision, our intersection routines are guaranteed to be robust.

We assume that the background mesh $\Omega_h$ is a $d$-dimensional triangulation and the immersed mesh $\gamma_h$ is $(d-1)$-dimensional %
with $d=2,3$. We validate our implementations with several experiments varying 
mesh configurations, algorithms, and boundary conditions. The source code used
to reproduce the numerical experiments is available from GitHub
\footnote{\href{https://github.com/fdrmrc/non_matching_test_suite.git}{https://github.com/fdrmrc/non\_matching\_test\_suite.git}}.

The tests are designed to analyze the performance of the methods presented in
Section~\ref{sec:methods} in different settings, varying the complexity of the
interface and the smoothness of the exact solution in both two and three
dimensions. All tests are performed using background meshes made of
quadrilaterals or hexahedra and immersed boundary meshes made of segments and
quadrilaterals. For the Lagrange multiplier and Nitsche's interface penalization
methods, we perform an initial pre-processing of the background grid $\Omega$ by
applying a localized refinement around the interface (where most of the error is concentrated), so that the resulting
number of degrees of freedom for the variable $u_h$ is roughly the same for all methods. 
Sample grids resulting from this process are shown in Figure~\ref{fig:interfaces}, where the interface has been resolved from
$\Omega_h$. We then proceed by computing errors and convergence rates against a
manufactured solution under simultaneous refinement of both the background and
immersed mesh. 

Classical $\mathcal{Q}^1$ Lagrangian elements are used for the background space
while piecewise constant elements are used to discretize the Lagrange
multiplier. For the Nitsche penalization method~\eqref{eqn:penalisation}, we
set the penalty parameter as $\beta=10$. Errors in the $H^1$- and $L^2$-norm  are reported for the main variable while for the Lagrange multiplier we use the discrete norm in \eqref{eqn:frac_norm} which, as already observed, is equivalent to the $H^{-1/2}(\gamma)$ norm on a quasi-uniform mesh. In the case of the Lagrange multiplier method, we
report the sum of the number of Degrees of Freedom (DoF) for $u_h$ and
$\lambda_h$ to underline the fact that a larger system must be solved, while
rates are computed against the number of DoF of each unknown.

For the (symmetric) penalized methods, the resulting linear systems are solved using a preconditioned conjugate gradient method, with an algebraic multigrid preconditioner based on the \texttt{Trilinos ML} implementation~\cite{Trilinos}, while for the Lagrange multiplier we exploit the preconditioner described
in Section~\ref{subsubsec:LM_discretisation} with the same preconditioned conjugate gradient method for inner solves of the stiffness matrices, and flexible GMRES as the outer solver. In the case of the Lagrange multiplier method, we list the total number of inner iterations required to invert the Schur complement.

\subsection{2D numerical tests}

In order to make the comparison between the three methods as close to real use cases as possible, we do not exploit any \emph{a-priori} knowledge of analytical level set descriptions of the exact interfaces. Indeed, using this information one could expect faster computations of the intersections, and an overall reduction of the computational costs of the assembly routines. Instead, we fix the same discretization of the interface as input data of the computational problem for all three methods.

In particular, we consider as computational domain $\Omega = [-1,1]^2$ with immersed domains of different shapes originating from an unfitted discretization of two different curves:
\begin{itemize}
    \item \emph{circle interface;} we let  $\gamma^1 \coloneqq \partial B_R(\boldsymbol{c})$, 
    \item \emph{flower-shaped interface;} we let $\gamma^2 \coloneqq \Bigl\{ (x,y): \sqrt{x^2+y^2} - r \Bigl(1 - 2 \frac{y^2}{x^2 + y^2} \Bigr) \Bigl(1 - 16 \frac{x^2y^2}{(x^2 + y^2)^2} \Bigr) - R =0 \Bigr\}$,
    
\end{itemize}
where the first is a circle of radius $R$ centered at $\boldsymbol{c}$ and the
second is a flower-like interface. We choose as parameter values
$\boldsymbol{c}=(\frac{1}{2},\frac{1}{2})$, $R=0.3$ and $r=0.1$. The initial
discretization of the immersed domains are chosen so that the geometrical error
is negligible w.r.t. the discretization errors (see
Figure~\ref{fig:interfaces}). In the case of the circle interface $\gamma^1$, the discrete interface is generated using a built-in mesh generatorsfrom
\texttt{deal.II} while the flower-shaped interface $\gamma^2$ is imported from an external file. When required, we implement a discrete level set function $\Psi_h$ as described in
Algorithm~\ref{alg:discrete_level_set}. Figure~\ref{fig:interfaces} shows a
representation of the two interfaces.

In the first two tests, we set up the problem using the method of manufactured
solutions, imposing the data $f$ in $\Omega$ and the boundary conditions on
$\Gamma$ and $\gamma$ according to the exact solution 
\begin{align}
u(x,y) \coloneqq \sin(2 \pi x) \sin(2 \pi y).
\end{align}
Hence, the right hand side of~\eqref{problem} is $f=8 \pi^2 u(x,y)$ and the data on the outer boundary $\Gamma$ and on $\gamma$ are computed accordingly. Notice that the smoothness of the solution implies $\lambda\equiv 0$ in this case. 

\begin{figure}[ht]
    \centering
    \subfloat[\centering]{{\includegraphics[width=7cm]{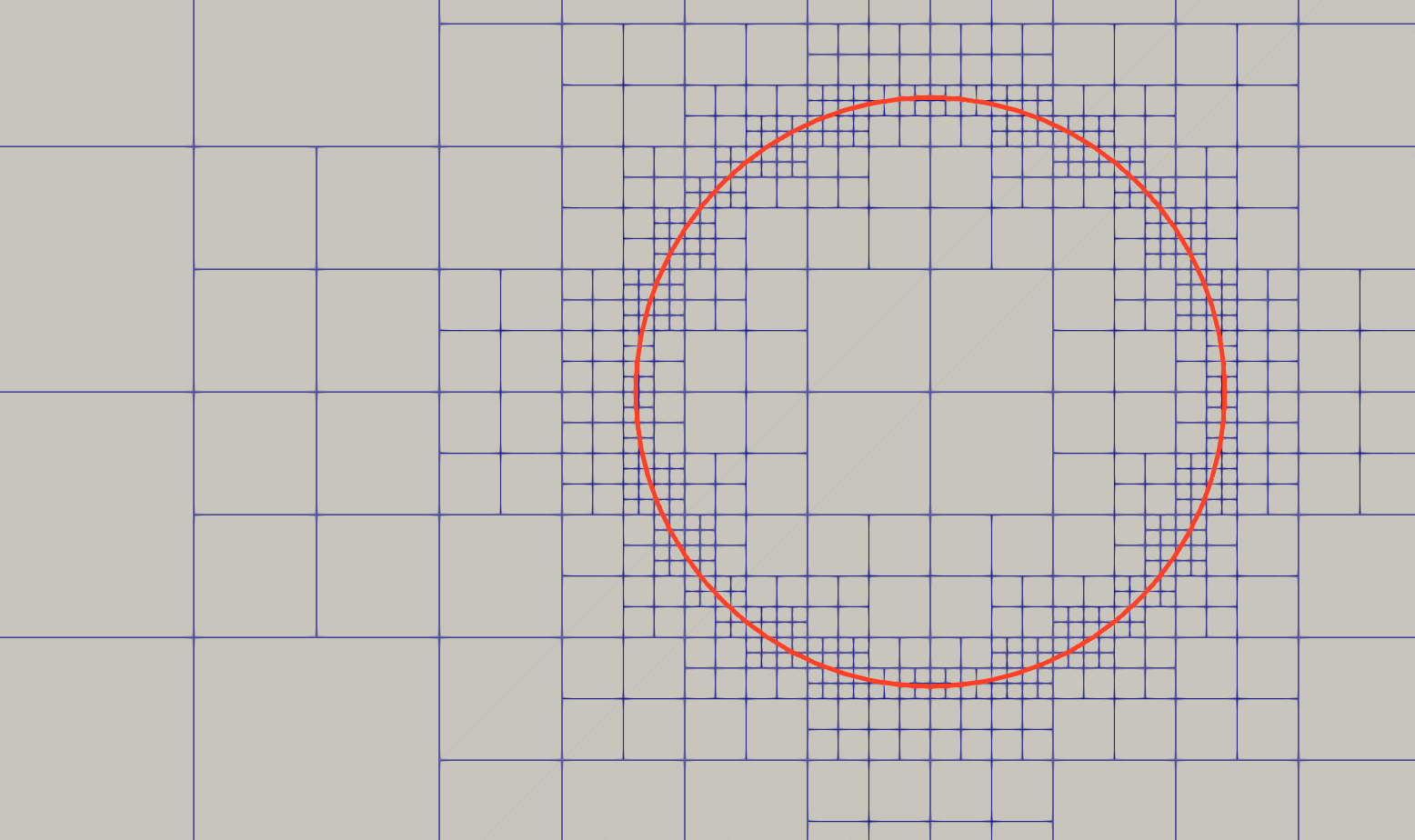}}}%
    \qquad
    \subfloat[\centering]{{\includegraphics[width=7cm]{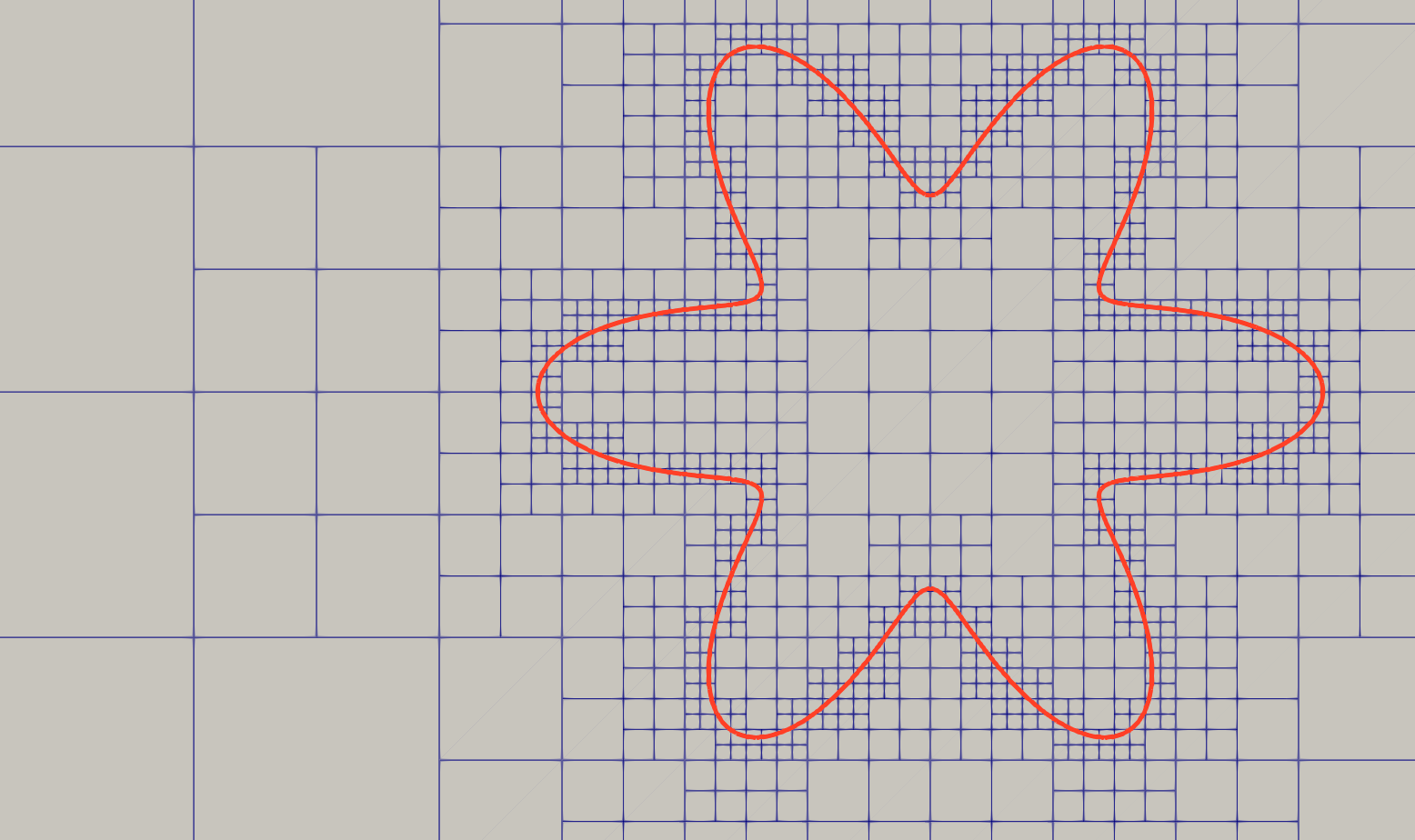} }}%
    \caption{Zoom on pre-processed background grid $\Omega_h$ for the circle interface $\gamma = \gamma^1$ (left) and the flower-shaped interface $\gamma = \gamma^2$ (right).}
    \label{fig:interfaces}
\end{figure}

This test is meant to assess the basic correctness of the implementation of the three methods, and corresponds to a case in which the interface is truly not an interface.%
Instead, when we impose an arbitrary value for the solution at the interface, we
expect the solution $u$ to be  only in $H^{\frac{3}{2}-\varepsilon}(\Omega)$ for
any $\varepsilon>0$, even thought its local regularity on the two subdomains
$\omega$ and $\Omega\setminus\omega$ may be higher. This is due to the fact that
the gradient of the solution is not a continuous function across the interface,
and therefore the solution cannot be in $H^2(\Omega)$. In this case, we cannot expect 
non-matching methods that does not resolve the interface \emph{exactly}, such as the
Lagrange multiplier and the Nitsche's interface penalization method, to be
able to recover the optimal rate of convergence.

In the tables below  we also report the number of iterations required in the solution phase in the column `Iter.'. We observe for all experiments a similar number of iterations for the three methods (which are independent on the number of degrees of freedom, indicating a good choice of preconditioner for all three methods) even though the solution of the linear system stemming from the Lagrange multiplier method is generally more expensive compared to the other two methods, owing to the higher computational complexity of the preconditoner for the saddle-point problem. The balance in the computational cost of the three different methods is discussed in details in
Section~\ref{sec:cpu-times}.

\subsubsection{\emph{Test 1: smooth solution over circular interface}}

We report in Tables \ref{tab:LM_gamma1}, \ref{tab:interface_penalisation_gamma1}, and \ref{tab:cut-FEM_gamma1} the errors and computed rates for the Lagrange multiplier, Nitsche's interface penalization, and cut-FEM method, respectively. 
In each case, the background variable converges linearly and quadratically in the $H^1$- and $L^2$-norm, respectively. As for the Lagrange multiplier, we observe a convergence rate close to two instead of the theoretical rate of one, most likely due to the very special exact solution that the multiplier converges to (i.e., the zero function).
For a direct comparison, we also report in Figure~\ref{fig:Convergence_smooth_2d} (left) the convergence history of all three methods against the number of DoF. These results clearly indicate that for smooth problems with relatively simple interfaces the three methods perform similarly.

\pgfplotstableread[comment chars={D}]{
    DOFs   L2          H1 
    329    8.7567e-02  2.1335e+00
    1161   2.2067e-02  1.0582e+00
    4377   5.4877e-03  5.2351e-01
   16953   1.3555e-03  2.5863e-01
   66665   3.3152e-04  1.2855e-01
}\gammaOneCutFEM
 
\pgfplotstableread[comment chars={D}]{
    DOFs   L2          H1         H-1/2
    389    5.579e-02   1.879e+00  7.402e-02
    1721   1.393e-02   9.391e-01  1.042e-02
    7217   3.479e-03   4.690e-01  2.805e-03
    29537  8.691e-04   2.343e-01  6.716e-04
    119489 2.172e-04   1.171e-01  2.078e-04
}\gammaOneLM

\pgfplotstableread[comment chars={D}]{
    Dofs   L2         H1 
     389 5.597e-02  1.879e+00
    1721 1.396e-02  9.391e-01
    7217 3.487e-03  4.690e-01
   29537 8.712e-04  2.343e-01
  119489 2.177e-04  1.171e-01
}\gammaOnePenalisation

\pgfplotstableread[comment chars={D}]{
    DOFs   L2          H1
      1209  2.2306e-02  1.0422e+00
      4487  5.7207e-03  5.2424e-01
     17163  1.4123e-03  2.5901e-01
     67089  3.4377e-04  1.2865e-01
    265249  8.5223e-05  6.4118e-02
}\gammaTwoCutFEM

\pgfplotstableread[comment chars={D}]{
    Dofs       H1       L2           H-1/2
      1958 5.327e-02   1.781e+00  6.551e-03
      8783 1.331e-02   8.916e-01  1.183e-03
     37037 3.326e-03   4.458e-01  5.237e-04
    151961  8.312e-04  2.228e-01  1.959e-04
    615473  2.078e-04  1.114e-01  5.327e-05
}\gammaTwoLM

\pgfplotstableread[comment chars={D}]{
    Dofs   L2         H1
   1958 5.327e-02  1.781e+00
   8783 1.331e-02  8.916e-01
  37037 3.326e-03  4.458e-01
 151961 8.312e-04  2.228e-01
 615473 2.078e-04  1.114e-01
}\gammaTwoPenalisation

\begin{table}[H]
    \begin{center}
        \begin{tabular}{ |p{2.2cm}||p{2cm}|p{1.1cm}|p{1.8cm}|p{1.1cm}|p{2cm}|p{1.1cm}|p{1cm}| }
            \hline
            \multicolumn{8}{|c|}{\textbf{Results for $\gamma = \gamma^1$ and smooth solution with Lagrange multiplier} } \\
            \hline
            DoF number & \textit{$\| u - u_h \|_{0,\Omega}$}& \textit{$L^2(\Omega)$ rate} & \textit{$\| u - u_h \|_{1,\Omega}$} & \textit{$H^1(\Omega)$ rate}  & \textit{$\| \lambda - \lambda_h \|_{-\frac{1}{2},\gamma}$} &  \textit{$H^{-\frac{1}{2}}(\gamma)$ rate} & Iter. \\
            \hline
               389+32 &   5.579e-02  &    -  &  1.879e+00 &     - &   7.402e-02 &    -  & 7  \\   
              1721+64  &  1.393e-02  & 1.87  &  9.391e-01 &  0.93 &   1.042e-02 & 2.83  & 7 \\ 
              7217+128  &  3.479e-03  & 1.94  &  4.690e-01 &  0.97 &   2.805e-03 & 1.89  & 9 \\ 
             29537+256  &  8.691e-04 & 1.97  &  2.343e-01 &  0.98 &   6.716e-04 & 2.06  & 11 \\ 
            119489+512 &  2.172e-04  & 1.98  &  1.171e-01 &  0.99 &   2.078e-04 & 1.69  & 11 \\ 
            \hline
        \end{tabular}
    \end{center}
    \caption{Rates in $L^2$ and $H^1$ for a smooth $u$ and $H^{-\frac{1}{2}}$ rates for the Lagrange multiplier method.}
    \label{tab:LM_gamma1}
\end{table}

\begin{table}[H]

\begin{center}
    \begin{tabular}{ |p{2.2cm}||p{2cm}|p{1.2cm}|p{2cm}|p{1.2cm}|p{1cm}| }
     \hline
     \multicolumn{6}{|c|}{\textbf{Results for $\gamma = \gamma^1$ and smooth solution with Nitsche} } \\
     \hline
     DoF number & \textit{$\| u - u_h \|_{0,\Omega}$}& \textit{$L^2(\Omega)$ rate} & \textit{$\| u - u_h \|_{1,\Omega}$} & \textit{$H^1(\Omega)$ rate} & Iter. \\
     \hline
        389 & 5.597e-02  &    -  &  1.879e+00   &     - & 1 \\
       1721 & 1.396e-02  & 2.13 &   9.391e-01   &  1.06 & 11\\
       7217 & 3.487e-03  & 2.06  &  4.690e-01   &  1.03 & 11 \\
      29537 & 8.712e-04  & 2.03  &  2.343e-01   &  1.02 & 12 \\
     119489 & 2.177e-04  & 2.02  &  1.171e-01   &  1.01 & 12 \\
     \hline  
    \end{tabular}
\end{center}
    \caption{$L^2$ and $H^1$ error rates for  $\gamma = \gamma^1$ and a smooth solution with Nitsche.}
    \label{tab:interface_penalisation_gamma1}
\end{table}

\begin{table}[H]

\begin{center}
    \begin{tabular}{ |p{2.5cm}||p{2cm}|p{1.2cm}|p{2cm}|p{1.2cm}|p{1cm}| }
     \hline
     \multicolumn{6}{|c|}{\textbf{Results for $\gamma = \gamma^1$ and smooth solution with cut-FEM} } \\
     \hline
     DoF number & \textit{$\| u - u_h \|_{0,\Omega}$}& \textit{$L^2(\Omega)$ rate} & \textit{$\| u - u_h \|_{1,\Omega}$} & \textit{$H^1(\Omega)$ rate} & Iter. \\
     \hline
        329 & 8.7560e-02 & -     &  2.1335e+00  &  -    & 2 \\
       1161 & 2.2064e-02 & 1.99  &  1.0582e+00  &  1.01 & 2 \\
       4377 & 5.4875e-03 & 2.01  &  5.2351e-01  &  1.02 & 15 \\
      16953 & 1.3554e-03 & 2.02  &  2.5863e-01  &  1.02 & 17 \\
      66665 & 3.3152e-04 & 2.03  &  1.2855e-01  &  1.01 & 18 \\
    
     \hline  
    \end{tabular}
\end{center}
    \caption{$L^2$ and $H^1$ error rates for  $\gamma = \gamma^1$ and a smooth solution with cut-FEM.}
    \label{tab:cut-FEM_gamma1}
\end{table}

\subsubsection{\emph{Test 2: smooth solution over flower-shaped interface}  }

We report in Tables \eqref{tab:LM_gamma2}, \eqref{tab:interface_penalisation_gamma2} and \eqref{tab:cut-FEM_gamma2} the error with the computed rates of convergence for the three schemes. Again we observe the theoretical rates of convergence for all three methods. This time, however, the direct comparison of the errors shown in Figure~\ref{fig:Convergence_smooth_2d} (right) indicates that the cut-FEM approach has an advantage over the other two methods. This may be due to the worst approximation properties of the non-matching methods based on Lagrange multipliers and Nitsche's interface penalization in the presence of high curvature sections of the immersed boundary, as can be seen by comparing the meshes shown in Figure~\ref{fig:interfaces}. Despite the fact that the reached accuracy is the same for the Lagrange multiplier and 
We further note that the Nitsche's interface penalization method requires less iterations than the Lagrange multiplier method. 

\begin{figure}[ht]
    \centering
    \includegraphics{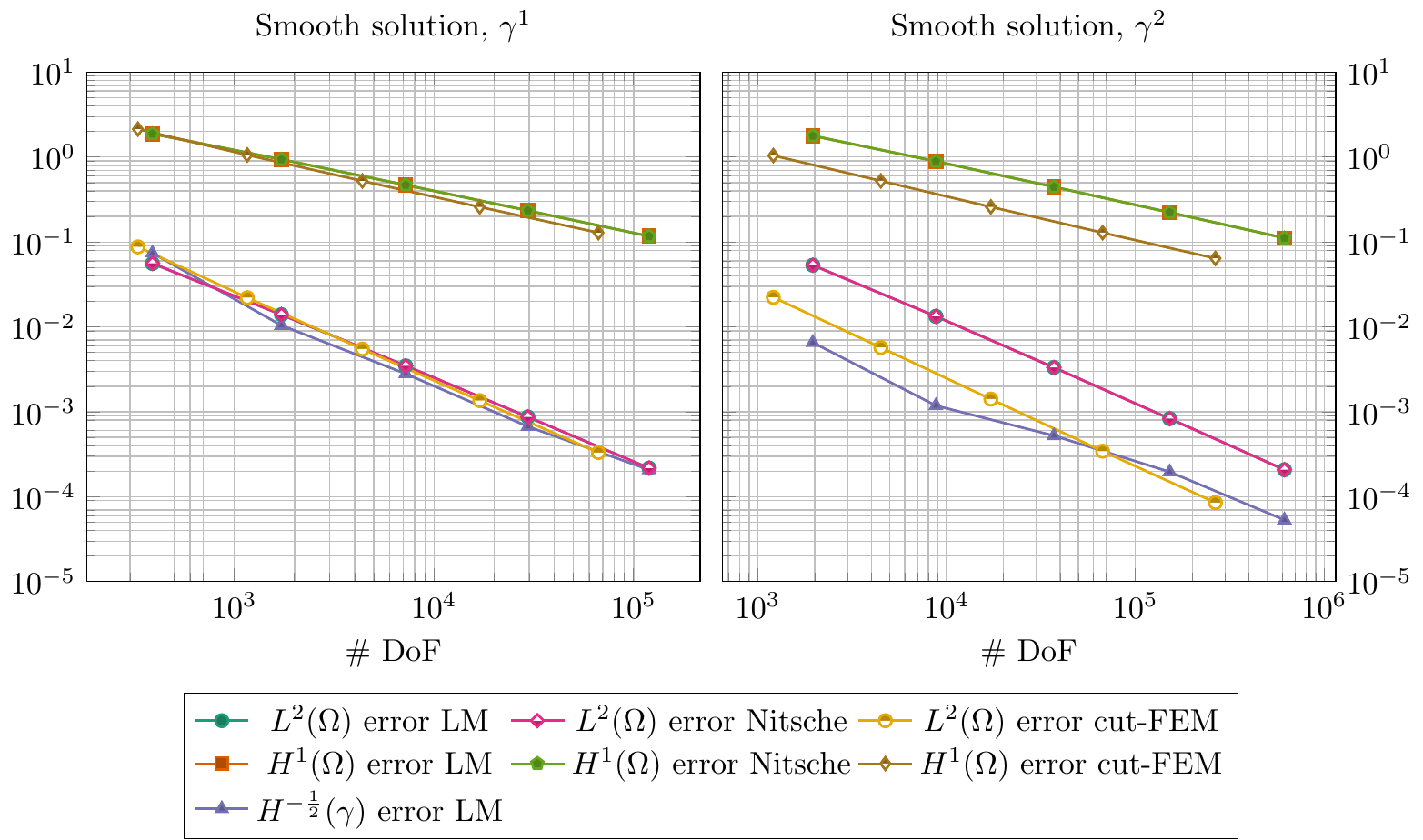}

   \caption{ $L^2$, $H^1$, and $H^{-\frac12}$ errors versus the number of DoF
   for all schemes applied to $\gamma^1$ (left) and to $\gamma^2$ (right).}%
    \label{fig:Convergence_smooth_2d}
\end{figure}

\begin{table}[H]
\begin{center}
\begin{tabular}{ |p{2.2cm}||p{2cm}|p{1.1cm}|p{1.8cm}|p{1.1cm}|p{2cm}|p{1.1cm}|p{1cm}| }
 \hline
 \multicolumn{8}{|c|}{\textbf{Results for $\gamma = \gamma^2$ and smooth solution with Lagrange multiplier} } \\
 \hline
 DoF number & \textit{$\| u - u_h \|_{0,\Omega}$}& \textit{$L^2(\Omega)$ rate} & \textit{$\| u - u_h \|_{1,\Omega}$} & \textit{$H^1(\Omega)$ rate} & \textit{$\| \lambda - \lambda_h \|_{-\frac{1}{2},\gamma}$} & \textit{$H^{-\frac{1}{2}}(\gamma)$ rate} & Iter. \\
 \hline
1958+256  & 5.327e-02  &    - &  1.781e+00  &    -  &   6.551e-03 &     - & 17 \\
8783+512  & 1.331e-02  & 1.85 &  8.916e-01  & 0.92  &   1.183e-03 &  2.47 & 20 \\
37037+1024 & 3.326e-03  & 1.93 &  4.458e-01  & 0.96  &  5.237e-04 &  1.18 &  33 \\
151961+2048 & 8.312e-04  & 1.96 &  2.228e-01  & 0.98  & 1.959e-04 &  1.42 &   49 \\
615473+4096 & 2.078e-04  & 1.98 &  1.114e-01  & 0.99  & 5.327e-05 &  1.88 &   53 \\
 \hline  
\end{tabular}
\end{center}
    \caption{$L^2$ and $H^1$ error rates for  $\gamma = \gamma^2$ and a smooth solution with Lagrange multiplier method.}
    \label{tab:LM_gamma2}
\end{table}

\begin{table}[H]

\begin{center}
    \begin{tabular}{ |p{2.2cm}|p{2cm}|p{1.1cm}|p{2cm}|p{1.1cm}|p{1.1cm}|p{1cm}| }
     \hline
     \multicolumn{6}{|c|}{\textbf{Results for $\gamma = \gamma^2$ and smooth solution with Nitsche} } \\
     \hline
     DoF number & \textit{$\| u - u_h \|_{0,\Omega}$}& \textit{$L^2(\Omega)$ rate} & \textit{$\| u - u_h \|_{1,\Omega}$} & \textit{$H^1(\Omega)$ rate} & Iter. \\
     \hline
      1958 & 5.327e-02 &    - &  1.781e+00 &     - & 13 \\
      8783 & 1.331e-02 & 1.85 &  8.916e-01 &  0.92 & 12 \\
     37037 & 3.326e-03 & 1.93 &  4.458e-01 &  0.96 & 13 \\
    151961 & 8.312e-04 & 1.96 &  2.228e-01 &  0.98 & 12 \\
    615473 & 2.078e-04 & 1.98 &  1.114e-01 &  0.99 & 14 \\
     \hline  
    \end{tabular}
\end{center}
    \caption{$L^2$ and $H^1$ error rates for  $\gamma = \gamma^2$ and a smooth solution with Nitsche}
    \label{tab:interface_penalisation_gamma2}
\end{table}

\begin{table}[H]

\begin{center}
    \begin{tabular}{ |p{2.2cm}|p{2cm}|p{1.1cm}|p{2cm}|p{1.1cm}|p{1.1cm}|p{1cm}| }
     \hline
     \multicolumn{6}{|c|}{\textbf{Results for $\gamma = \gamma^2$ and smooth solution with cut-FEM} } \\
     \hline
     DoF number & \textit{$\| u - u_h \|_{0,\Omega}$}& \textit{$L^2(\Omega)$ rate} & \textit{$\| u - u_h \|_{1,\Omega}$} & \textit{$H^1(\Omega)$ rate} & Iter. \\
     \hline
      1209 & 2.2306e-02 & -    &  1.0422e+00  &  -     & 2 \\
      4487 & 5.7207e-03 & 1.96 &  5.2424e-01  &  0.99  & 17 \\
     17163 & 1.4123e-03 & 2.02 &  2.5901e-01  &  1.02  & 17 \\
     67089 & 3.4377e-04 & 2.04 &  1.2865e-01  &  1.01  & 19 \\
    265249 & 8.5223e-05 & 2.01 &  6.4118e-02  &  1.00  & 20 \\
     \hline  
    \end{tabular}
\end{center}
    \caption{$L^2$ and $H^1$ error rates for  $\gamma = \gamma^2$ and a smooth solution with cut-FEM}
    \label{tab:cut-FEM_gamma2}
\end{table}

\subsubsection{\emph{Test 3: non-smooth solution over circular interface} }

\pgfplotstableread[comment chars={D}]{
    DOFs   L2          H1 
     389  1.413e-02  3.656e-01  3.354e-02
    1721  5.698e-03  2.254e-01  1.112e-02
    7217  3.248e-03  1.621e-01  6.273e-03
   29537  1.796e-03  1.141e-01  4.094e-03
  119489  1.099e-03  8.015e-02  2.845e-03
}\gammaOneNonSmoothLM

\pgfplotstableread[comment chars={D}]{
    DOFs   L2          H1 
      389  9.216e-03 3.667e-01
     1721  3.324e-03  2.286e-01
     7217  1.936e-03  1.640e-01
    29537  9.957e-04  1.151e-01
   119489  5.016e-04  8.070e-02
}\gammaOneNonSmoothPenalisation

\pgfplotstableread[comment chars={D}]{
    DOFs   L2          H1 
      329 3.1670e-02  5.6660e-01 
     1161 1.5998e-03  1.2352e-01 
     4377 3.6791e-04  5.8327e-02 
    16953 8.5282e-05  2.7307e-02 
    66665 2.1914e-05  1.3726e-02 
}\gammaOneNonSmoothCutFEM

In this test, we fix once more  $\gamma=\gamma^1$ and we define an exact solution with a non-zero jump of the normal gradient $\nabla u \cdot \boldsymbol{n}$ across $\gamma$ taken from \cite{Heltai2019}, namely
\begin{align}
\label{eqn:u_non_smmoth}
    u(x,y)& = 
    \begin{cases}
    -\ln(R)  \qquad \text{if } |r| \leq R \\
   -\ln(r) \qquad \text{  if } |r| > R, \\
    \end{cases}
\end{align}
where $r\coloneqq \boldsymbol{x} -\boldsymbol{c}$, implying as
right hand side $f=0$. The Lagrange multiplier associated to this solution is $\lambda(\boldsymbol{x}) \equiv \lambda =- \frac{1}{R}$, as $u$ solves the following classical interface problem:
\begin{align}
    \begin{cases}
        -\Delta u  &= 0  \qquad \text{ in } \Omega \setminus \gamma, \\
        u &= -\ln(r) \quad \text{  on } \Gamma, \\
       \llbracket \nabla u \cdot \boldsymbol{n} \rrbracket &= \frac{1}{R} \quad \text{ on } \gamma,  \\
        \llbracket u \rrbracket &=0 \quad \text{   on } \gamma. \\
    \end{cases}
\end{align}

Since the global regularity of the solution is $H^{\frac32-\varepsilon}(\Omega)$
for any $\varepsilon>0$, theoretically we would expect the convergence rates of
the $L^2(\Omega)$, $H^1(\Omega)$, and $H^{-\frac12}(\gamma)$ norms of the errors
to be $1$, $0.5$, and $0.5$ for the Lagrange multiplier method, the same for the
variable $u_h$ in the Nitsche's interface penalization method, and the optimal convergence
rates observed in the smooth case in the case of the cut-FEM method. These are
shown in Tables~\ref{tab:Non_smooth_LM},~\ref{tab:Non_smooth_penalisation}
and~\ref{tab:cut_1d2d_nonsmooth} and plotted in
Figure~\ref{fig:convergence-non-smooth}.

The results show a clear advantage in terms of convergence rates and absolute
values of the errors for the cut-FEM method. In all cases (both smooth and
non-smooth), the Nitsche's interface penalization method and the Lagrange multiplier give
essentially the same errors. A simple way to improve the situation for the latter
two methods would be to use a weighted norm during the computation of the error,
which was proven to be localized at the interface in \cite{Heltai2019}. This
would allow to reduce the overall error, but it would still result in a solution
that does not capture correctly the jump of the normal gradient across the
interface, which is the main source of error.

\begin{table}[H]

\begin{center}
    \begin{tabular}{ |p{2.2cm}||p{2cm}|p{1.1cm}|p{1.8cm}|p{1.1cm}|p{2cm}|p{1.1cm}|p{1cm}| }
     \hline
     \multicolumn{8}{|c|}{\textbf{Results for $\gamma = \gamma^1$ and non-smooth solution with Lagrange multipliers} } \\
     \hline
     DoF number & \textit{$\| u - u_h \|_{0,\Omega}$}& \textit{$L^2(\Omega)$ rate} & \textit{$\| u - u_h \|_{1,\Omega}$} & \textit{$H^1(\Omega)$ rate}  & \textit{$\| \lambda - \lambda_h \|_{-\frac{1}{2},\gamma}$} & \textit{$H^{-\frac{1}{2}}(\gamma)$ rate}  &Iter. \\
     \hline
       389+32 & 1.413e-02  &     - &  3.656e-01 &    - & 3.354e-02 &     - & 2 \\
      1721+64 & 5.706e-03  &  1.39 &  2.254e-01 & 0.74 & 1.112e-02 &  1.59 & 14 \\
      7217+128 & 3.250e-03  &  0.84 &  1.621e-01 & 0.49 & 6.273e-03 &  0.83 & 13 \\
     29537+256 & 1.796e-03  &  0.87 &  1.141e-01 & 0.51 & 4.094e-03 &  0.62 & 15 \\
    119489+512 & 1.099e-03  &  0.71 &  8.015e-02 & 0.51 & 2.845e-03 &  0.52 & 14 \\
    
     \hline
    \end{tabular}
\end{center}
    \caption{$L^2$-error and $H^1$-error for non smooth $u$ in \eqref{eqn:u_non_smmoth} and for the multiplier. }
    \label{tab:Non_smooth_LM}
\end{table}

\begin{table}[H]

\begin{center}
    \begin{tabular}{ |p{2.2cm}|p{2cm}|p{1.1cm}|p{2cm}|p{1.1cm}|p{1.1cm}|p{1cm}| }
     \hline
     \multicolumn{6}{|c|}{\textbf{Results for non-smooth $u$ with Nitsche} } \\
     \hline
     DoF number & \textit{$\| u - u_h \|_{0,\Omega}$}& \textit{$L^2(\Omega)$ rate} & \textit{$\| u - u_h \|_{1,\Omega}$} & \textit{$H^1(\Omega)$ rate} & Iter. \\
     \hline
       389 & 9.216e-03 &     - & 3.667e-01 &    - & 1 \\
      1721 & 3.324e-03 &  1.37 & 2.286e-01 & 0.64 & 11 \\
      7217 & 1.936e-03 &  0.75 & 1.640e-01 & 0.46 & 11 \\
     29537 & 9.957e-04 &  0.94 & 1.151e-01 & 0.50 & 12 \\
    119489 & 5.016e-04 &  0.98 & 8.070e-02 & 0.51 & 13  \\
     \hline
    \end{tabular}
\end{center}
    \caption{$L^2$-error, $H^1$-error for non smooth $u$ in \eqref{eqn:u_non_smmoth}.}
    \label{tab:Non_smooth_penalisation}
\end{table}

\begin{table}[H]
\begin{center}\begin{tabular}{ |p{2.2cm}|p{2cm}|p{1.1cm}|p{2cm}|p{1.1cm}|p{1.1cm}|p{1cm}| }
 \hline
 \multicolumn{6}{|c|}{\textbf{Results for non-smooth $u$ with cut-FEM } } \\
 \hline
  DoF number & \textit{$\| u - u_h \|_{0,\Omega}$}& \textit{$L^2(\Omega)$ rate} & \textit{$\| u - u_h \|_{1,\Omega}$} & \textit{$H^1(\Omega)$ rate} & Iter. \\
 \hline
   329 & 3.1670e-02 & -    & 5.6660e-01  &  -    & 2 \\
  1161 & 1.5998e-03 & 4.31 & 1.2352e-01  &  2.20 & 2   \\
  4377 & 3.6791e-04 & 2.12 & 5.8327e-02  &  1.08 & 16   \\
 16953 & 8.5282e-05 & 2.11 & 2.7307e-02  &  1.09 & 18   \\
 66665 & 2.1914e-05 & 1.96 & 1.3726e-02  &  0.99 & 19   \\
 \hline
\end{tabular}\end{center}
    \caption{$L^2$-error, $H^1$-error for non smooth $u$ in \eqref{eqn:u_non_smmoth} with cut-FEM.}
    \label{tab:cut_1d2d_nonsmooth}
\end{table}

\begin{figure}[ht]
    \centering
    \includegraphics{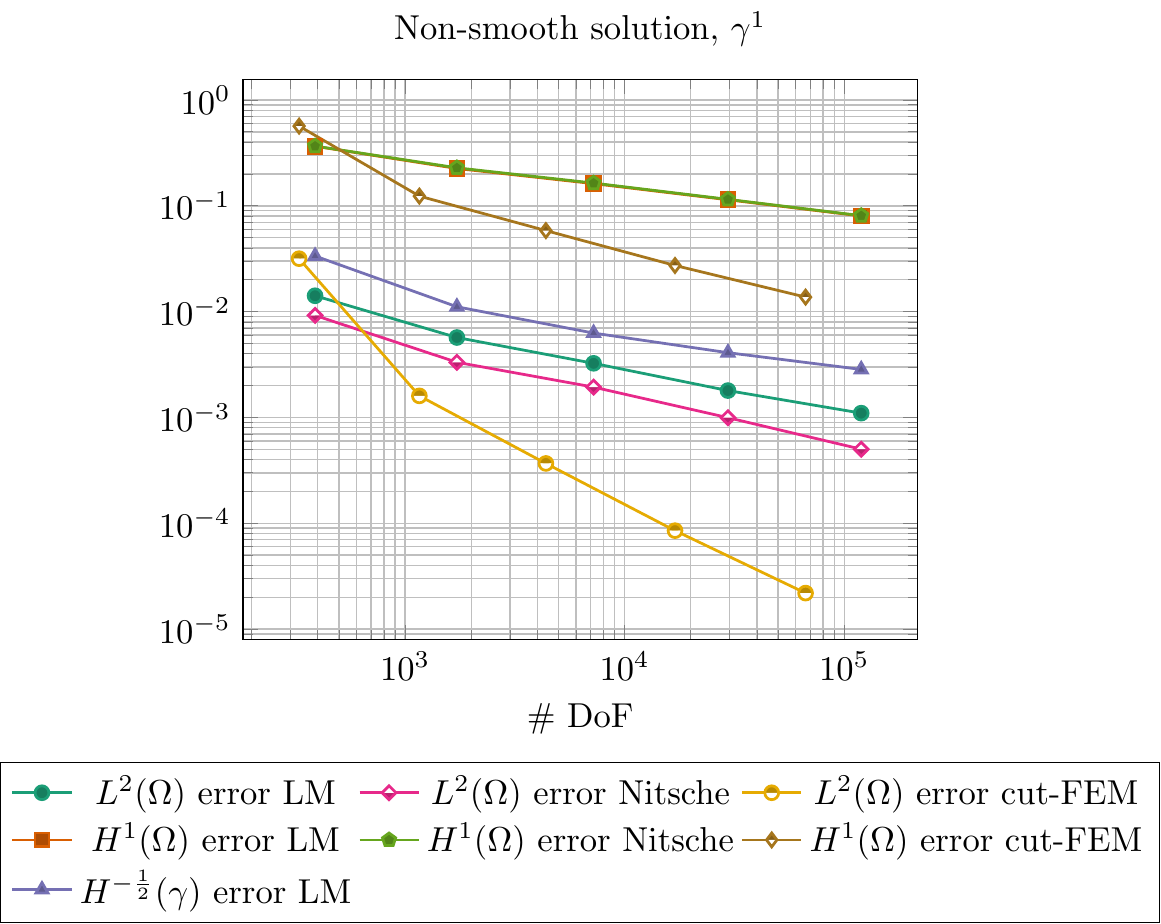}

   \caption{ $L^2$, $H^1$, and $H^{-\frac12}$ errors versus the number of DoF
   for all schemes applied to $\gamma^1$, with non-smooth solution.}%
    \label{fig:convergence-non-smooth}
\end{figure}

\subsection{3D numerical tests}

\pgfplotstableread[comment chars={D}]{
    DOFs   L2          H1 
   4127  6.416e-02  2.432e+00  0.1027 
  37031  1.590e-02  1.212e+00  0.0287 
 313007  3.963e-03  6.051e-01  0.0069
2572511  9.996e-04  3.024e-01  0.0019
}\gammaThreeLM

\pgfplotstableread[comment chars={D}]{
    DOFs   L2          H1 
  4127    6.411e-02  2.432e+00
  37031   1.588e-02  1.212e+00
  313007  3.959e-03  6.050e-01
  2572511 9.887e-04  3.023e-01
}\gammaThreePenalisation

\pgfplotstableread[comment chars={D}]{
    DOFs   L2          H1 
    5163 7.0847e-02 2.4835e+00  
   36781 1.7743e-02 1.2479e+00  
  278157 4.5124e-03 6.2396e-01  
2160541  1.1302e-03 3.1154e-01
}\gammaThreeCutFEM

\pgfplotstableread[comment chars={D}]{
    DOFs   L2          H1 
    4127      3.907e-02  1.251e+00   0.4552
    37031     2.041e-02  8.119e-01   0.2598
    313007    1.095e-02  5.764e-01   0.1806
    2572511   6.214e-03  4.253e-01   0.1216
}\gammaThreeNonSmoothLM

\pgfplotstableread[comment chars={D}]{
    DOFs   L2          H1 
    4127    8.688e-02 1.562e+00
    37031   2.157e-02 8.885e-01
    313007  4.893e-03 5.724e-01
    2572511 1.887e-03 3.954e-01
}\gammaThreeNonSmoothPenalisation

\pgfplotstableread[comment chars={D}]{
    DOFs   L2          H1 
   5163   6.3609e-02   1.3046e+00
  36781   6.9076e-03   4.5097e-01
 278157   1.5076e-03   2.2098e-01
 2160541  3.3802e-04   9.3590e-02
}\gammaThreeNonSmoothCutFEM

We mimic the tests reported above for the two-dimensional setting also in the
three-dimensional case, but we restrict our analysis to the case of a sphere
immersed in a box. We fix $\Omega = [-1,1]^3$ and consider as immersed interface
a sufficiently fine discretization of the sphere $\gamma^3 \coloneqq \partial
B_R(\boldsymbol{c})$ with
$\boldsymbol{c}=(\frac{1}{2},\frac{1}{2},\frac{1}{2})$, $R=0.3$.
In this setting, we consider two test cases with smooth and non-smooth solution.

\subsubsection{\emph{Test 1: smooth solution over spherical interface}}
\label{subsec:3Dsmooth}

We proceed analogously to the previous section by computing convergence rates when the solution $u$ is smooth and defined as 
\begin{align}
\label{eqn:u_smooth}
u(x,y,z) \coloneqq \sin(2 \pi x) \sin(2 \pi y) \sin(2 \pi z),
\end{align}
which corresponds to the right hand side $f=12 \pi^2 u(x,y,z)$. We report in Tables \ref{tab:LM_gamma1_3D}, \ref{tab:interface_penalisation_gamma1_3D} and \ref{tab:cut-FEM_gamma1_3D} the error rates for the three schemes, while in Figures \ref{fig:Convergence_sphere_3D} (left) we plot errors against the number of DoF.

The convergence rates are as expected and the results are in line with the
two-dimensional case. The Lagrange multiplier method and the interface
penalization method give again very close computational errors for $u_h$. As
in the smooth two-dimensional case, this test should only be considered as a
validation of the code and of the error computation, since the interface does
not have any effect on the computational solutions.

\begin{table}[H]
\centering
\begin{tabular}{ |p{2.2cm}||p{2cm}|p{1.1cm}|p{1.8cm}|p{1.1cm}|p{2cm}|p{1.1cm}|p{1cm}| }
 \hline
 \multicolumn{8}{|c|}{\textbf{Results for $\gamma = \gamma^3$ and smooth solution with Lagrange multiplier} } \\
 \hline
 DoF number & \textit{$\| u - u_h \|_{0,\Omega}$}& \textit{$L^2(\Omega)$ rate} & \textit{$\| u - u_h \|_{1,\Omega}$} & \textit{$H^1(\Omega)$ rate} & \textit{$\| \lambda - \lambda_h \|_{-\frac{1}{2},\gamma}$} & $H^{-\frac{1}{2}}(\gamma)$ rate & Iter. \\
 \hline
   4127+96 &   6.416e-02 &    - & 2.432e+00  &    - & 0.1027 &    -  &  9  \\
  37031+384 &  1.590e-02 & 1.91 & 1.212e+00  & 0.95 & 0.0287 & 1.84   & 18 \\
 313007+1536 & 3.963e-03 & 1.95 & 6.051e-01  & 0.98 & 0.0069 & 2.06  &  19 \\
2572511+6144 & 9.996e-04 & 1.96 & 3.024e-01  & 0.99 & 0.0019 & 1.89   & 14 \\
 \hline 
\end{tabular}
    \caption{$L^2$ and $H^1$ error rates for  $\gamma = \gamma^3$ and a smooth solution with Lagrange multiplier.}
    \label{tab:LM_gamma1_3D}
\end{table}

\begin{table}[H]
\begin{center}\begin{tabular}{ |p{2.2cm}|p{2cm}|p{1.1cm}|p{2cm}|p{1.1cm}|p{1.1cm}|p{1cm}| }
 \hline
 \multicolumn{6}{|c|}{\textbf{Results for $\gamma = \gamma^3$ and smooth solution with Nitsche} } \\
 \hline
 DoF number & \textit{$\| u - u_h \|_{0,\Omega}$}& \textit{$L^2(\Omega)$ rate} & \textit{$\| u - u_h \|_{1,\Omega}$} & \textit{$H^1(\Omega)$ rate} & Iter. \\
 \hline
   4127 & 6.411e-02 & -    &  2.432e+00 &  -    & 17 \\
  37031 & 1.588e-02 & 2.01 &  1.212e+00 &  1.00 & 14 \\
 313007 & 3.959e-03 & 2.00 &  6.050e-01 &  1.00 & 14 \\
2572511 & 9.887e-04 & 2.00 &  3.023e-01 &  1.00 & 14 \\
 \hline  
\end{tabular}\end{center}
    \caption{$L^2$ and $H^1$ error rates for  $\gamma = \gamma^3$ and a smooth solution with Nitsche.}
    \label{tab:interface_penalisation_gamma1_3D}
\end{table}

\begin{table}[H]
\begin{center}\begin{tabular}{ |p{2.2cm}|p{2cm}|p{1.1cm}|p{2cm}|p{1.1cm}|p{1.1cm}|p{1cm}| }
 \hline
 \multicolumn{6}{|c|}{\textbf{Results for $\gamma = \gamma^3$ and smooth solution with cut-FEM} } \\
 \hline
 DoF number & \textit{$\| u - u_h \|_{0,\Omega}$}& \textit{$L^2(\Omega)$ rate} & \textit{$\| u - u_h \|_{1,\Omega}$} & \textit{$H^1(\Omega)$ rate} & Iter. \\
 \hline
   5163 & 7.0847e-02  & -     &  2.4835e+00      &  - & 13 \\
  36781 & 1.7743e-02  & 2.12  & 1.2479e+00   &  1.05 & 12 \\
 278157 & 4.5124e-03  & 2.03  & 6.2396e-01   &  1.03 & 13 \\
2160541 & 1.1302e-03  & 2.03  & 3.1154e-01  &  1.02 & 13 \\
 \hline  
\end{tabular}\end{center}
    \caption{$L^2$ and $H^1$ error rates for  $\gamma = \gamma^3$ and a smooth solution with cut-FEM}
    \label{tab:cut-FEM_gamma1_3D}
\end{table}

\subsubsection{\emph{Test 2: non-smooth solution over spherical interface}}

We consider 
the test case in~\cite{Heltai2019}:
\begin{align}
    \label{eqn:u_non_smmoth_3D}
    u(x,y)& = 
    \begin{cases}
    \frac{1}{R}  &  \text{ if } |r| \leq R, \\
    \frac{1}{|r|} &   \text{if } |r| > R, \\
    \end{cases}
\end{align} where $r\coloneqq \boldsymbol{x} -\boldsymbol{c}$, $f=0$, and $R=0.3$.  Analogously to the two dimensional case, the multiplier associated to this problem is $\lambda(\boldsymbol{x}) = \lambda = -\frac{1}{R^2}$, since $u$ solves the following problem:
\begin{align}
    \begin{cases}
        -\Delta u  &= 0  \qquad \text{ in } \Omega \setminus \gamma, \\
        u &= \frac{1}{|r|} \quad \text{  on } \Gamma,\\
       \llbracket \nabla u \cdot \boldsymbol{n} \rrbracket &= \frac{1}{R^2} \quad \text{ on } \gamma,  \\
        \llbracket u \rrbracket &=0 \quad \text{   on } \gamma. \\
    \end{cases}
\end{align}
We report in Tables \ref{tab:LM_gamma1_3D_ns}, \ref{tab:interface_penalisation_gamma1_3D_ns}, \ref{tab:cutfem_non_smooth_sphere} the error rates for the three schemes and in Figure \ref{fig:Convergence_sphere_3D} (right) the error decay in $L^2(\Omega)$ and $H^1(\Omega)$ for $u_h$ and the decay in $H^{-\frac{1}{2}}(\gamma)$ for $\lambda_h$. A contour plot of the discrete solution $u_h$ obtained with Nitsche's penalization method is shown in Figure \ref{fig:contour_3D}. 

The difference between the three
methods  is less evident in terms of absolute values of the errors when compared
to the two dimensional case: while it is still clear that the convergence rates
of cut-FEM are higher compared to the other two methods, the difference between the
three methods is smaller. In particular, the Nitsche's interface penalization method
seems to perform better than the Lagrange multiplier method when considering the $L^2$ norm.

\begin{table}[H]
\centering
\begin{center}\begin{tabular}{ |p{2.2cm}||p{2cm}|p{1.1cm}|p{1.8cm}|p{1.1cm}|p{2cm}|p{1.1cm}|p{1cm}| }
 \hline
 \multicolumn{8}{|c|}{\textbf{Results for $\gamma = \gamma^3$ and non-smooth solution with Lagrange multiplier} } \\
 \hline
 DoF number & \textit{$\| u - u_h \|_{0,\Omega}$}& \textit{$L^2(\Omega)$ rate} & \textit{$\| u - u_h \|_{1,\Omega}$} & \textit{$H^1(\Omega)$ rate} & \textit{$\| \lambda - \lambda_h \|_{-\frac{1}{2},\gamma}$} & $H^{-\frac{1}{2}}(\gamma)$ rate & Iter. \\
 \hline
   4127+96&    3.907e-02 &    -  & 1.251e+00 &     -  &  0.4552 &    - & 2 \\
  37031+384&   2.041e-02 & 0.89  & 8.119e-01 &  0.59  &  0.2598 & 0.81 & 3  \\
 313007+1536 & 1.095e-02 & 0.88  & 5.764e-01 &  0.48  &  0.1806 & 0.52 & 3 \\
2572511+6446 & 6.214e-03 & 0.81  & 4.253e-01 &  0.43  &  0.1216 & 0.57 & 3 \\

 \hline 
\end{tabular}\end{center}
    \caption{$L^2$ and $H^1$ error rates for  $\gamma = \gamma^3$ and non-smooth solution $u$ in \ref{eqn:u_non_smmoth_3D} with Lagrange multiplier.}
    \label{tab:LM_gamma1_3D_ns}
\end{table}

\begin{table}[H]
\begin{center}\begin{tabular}{ |p{2.5cm}||p{2cm}|p{1.4cm}|p{2cm}|p{1.4cm}|p{1.5cm}| }
 \hline
 \multicolumn{6}{|c|}{\textbf{Results for $\gamma = \gamma^3$ and non-smooth solution with Nitsche} } \\
 \hline
 DoF number & \textit{$\| u - u_h \|_{0,\Omega}$}& \textit{$L^2(\Omega)$ rate} & \textit{$\| u - u_h \|_{1,\Omega}$} & \textit{$H^1(\Omega)$ rate} & Iter.\\
 \hline
   4127 &  8.688e-02 &    - & 1.562e+00 &     -  & 19   \\
  37031 &  2.157e-02 & 2.01 & 8.885e-01 &  0.81  & 16  \\
 313007 &  4.893e-03 & 2.14 & 5.724e-01 &  0.63  & 16  \\
2572511 &  1.887e-03 & 1.37 & 3.954e-01 &  0.53  & 16 \\
 \hline  
\end{tabular}\end{center}
    \caption{Rates for  $\gamma = \gamma^3$ and a non-smooth solution $u$ in \ref{eqn:u_non_smmoth_3D} with Nitsche. }
    \label{tab:interface_penalisation_gamma1_3D_ns}
\end{table}

\begin{table}[H]
\begin{center}\begin{tabular}{ |p{2.5cm}||p{2cm}|p{1.4cm}|p{2cm}|p{1.4cm}|p{1.5cm}| }
 \hline
 \multicolumn{6}{|c|}{\textbf{Results for $\gamma = \gamma^3$ and non-smooth solution with cut-FEM} } \\
 \hline
 DoF number & \textit{$\| u - u_h \|_{0,\Omega}$}& \textit{$L^2(\Omega)$ rate} & \textit{$\| u - u_h \|_{1,\Omega}$} & \textit{$H^1(\Omega)$ rate} & Iter.\\
 \hline
  5163 & 6.3609e-02 & -      & 1.3046e+00  & - &    25 \\
 36781 & 6.9076e-03 & 3.39  &  4.5097e-01 & 1.62 &  22 \\
278157 & 1.5076e-03 & 2.26  &  2.2098e-01 & 1.06 &  24 \\
2160541 & 3.3802e-04 & 2.19  & 9.3590e-02  & 1.26 &  24 \\
 \hline  
\end{tabular}\end{center}
    \caption{$L^2$ and $H^1$ error for  $\gamma = \gamma^3$ and a non-smooth solution $u$ in \ref{eqn:u_non_smmoth_3D} with cut-FEM}
    \label{tab:cutfem_non_smooth_sphere}
\end{table}

\begin{figure}[ht]
    \centering
    \includegraphics{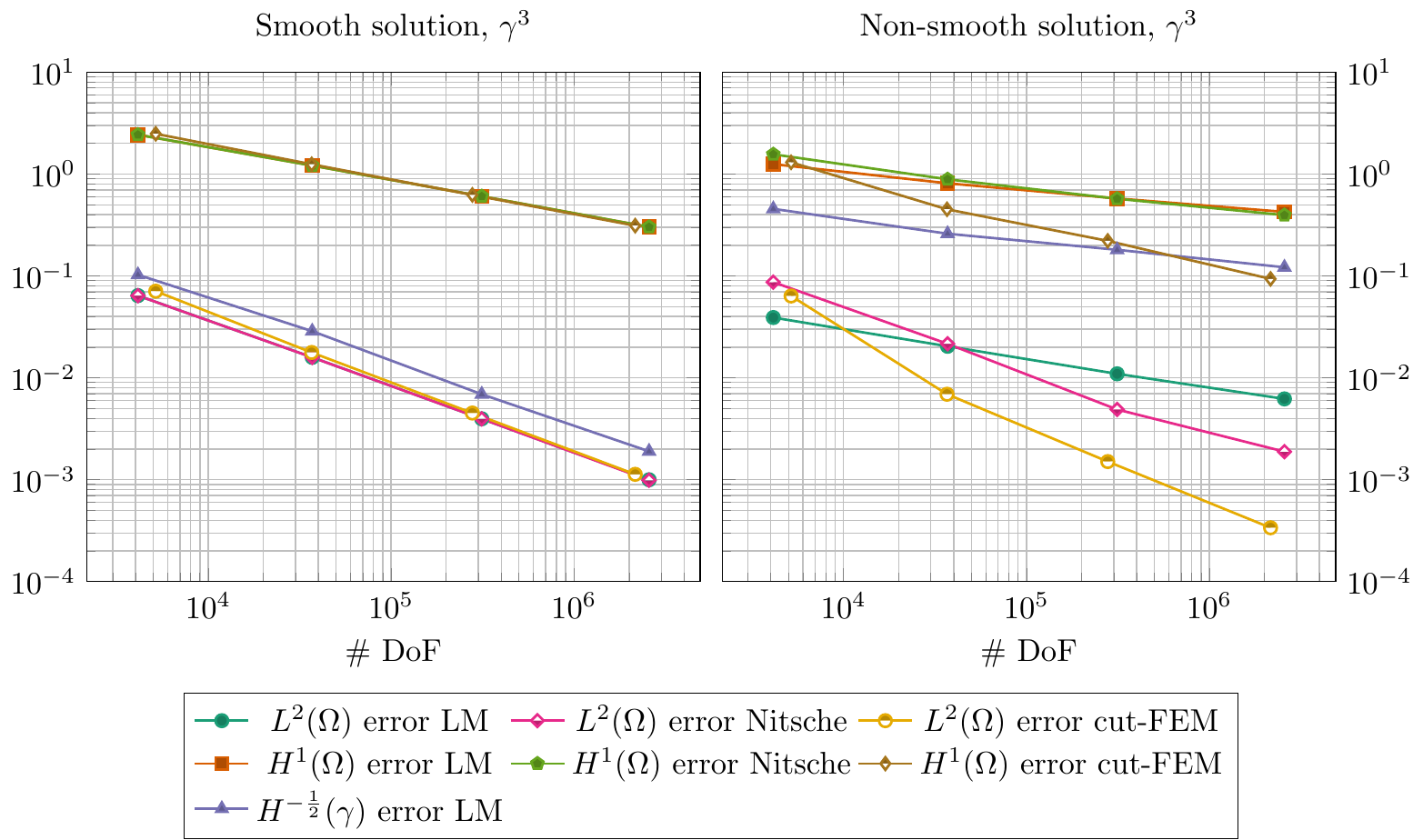}

    \caption{$L^2$, $H^1$, and $H^{-\frac12}$ error versus the number of DoF
    for all schemes applied to $\gamma^3$ with a smooth solution $u$ (left) and
    with a non-smooth solution $u$ (right).}
    \label{fig:Convergence_sphere_3D}
\end{figure}

\begin{figure}[ht]
    \centering
    \includegraphics[width=.49\textwidth]{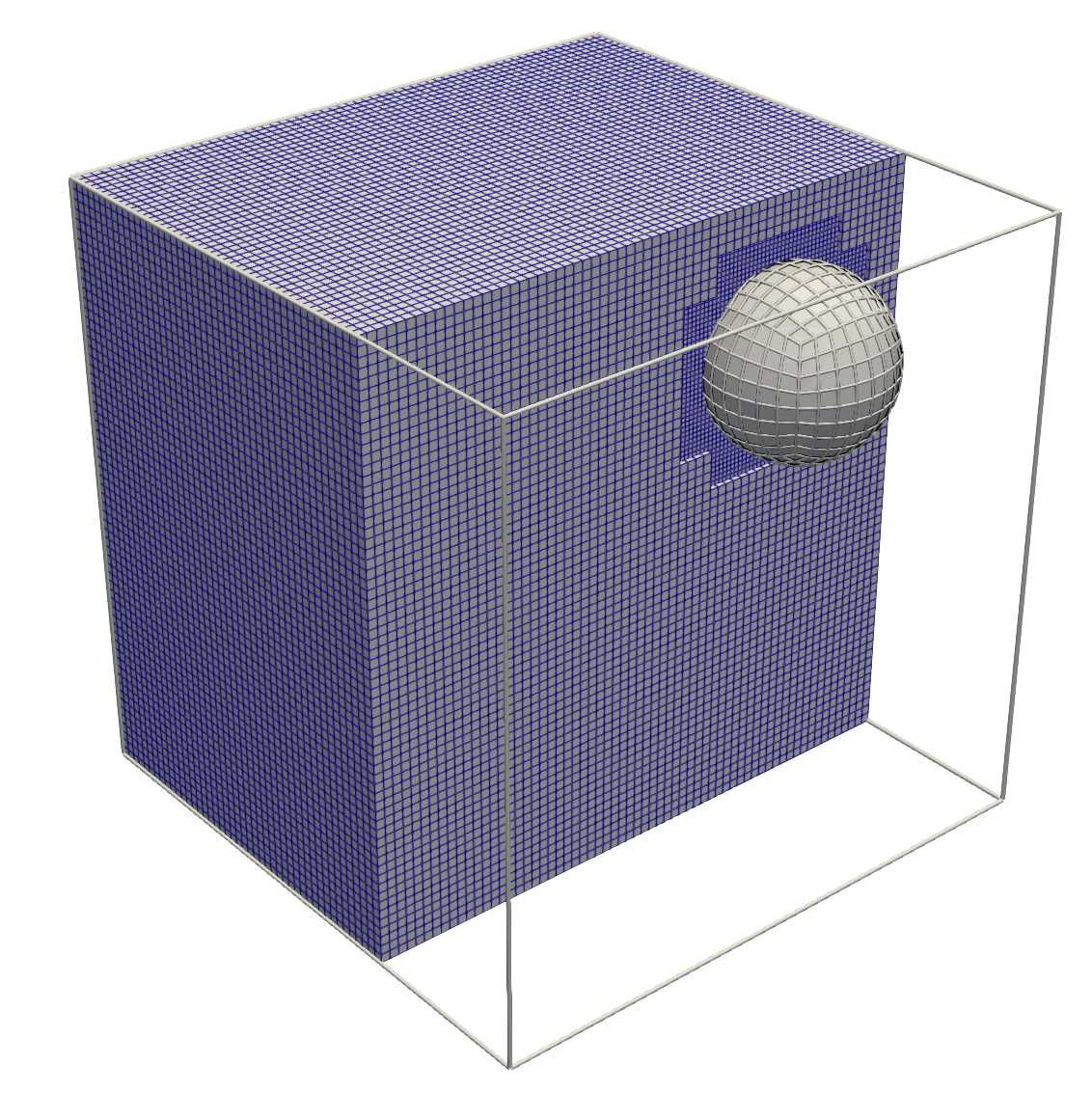}
    \hfill
    \includegraphics[width=.49\textwidth]{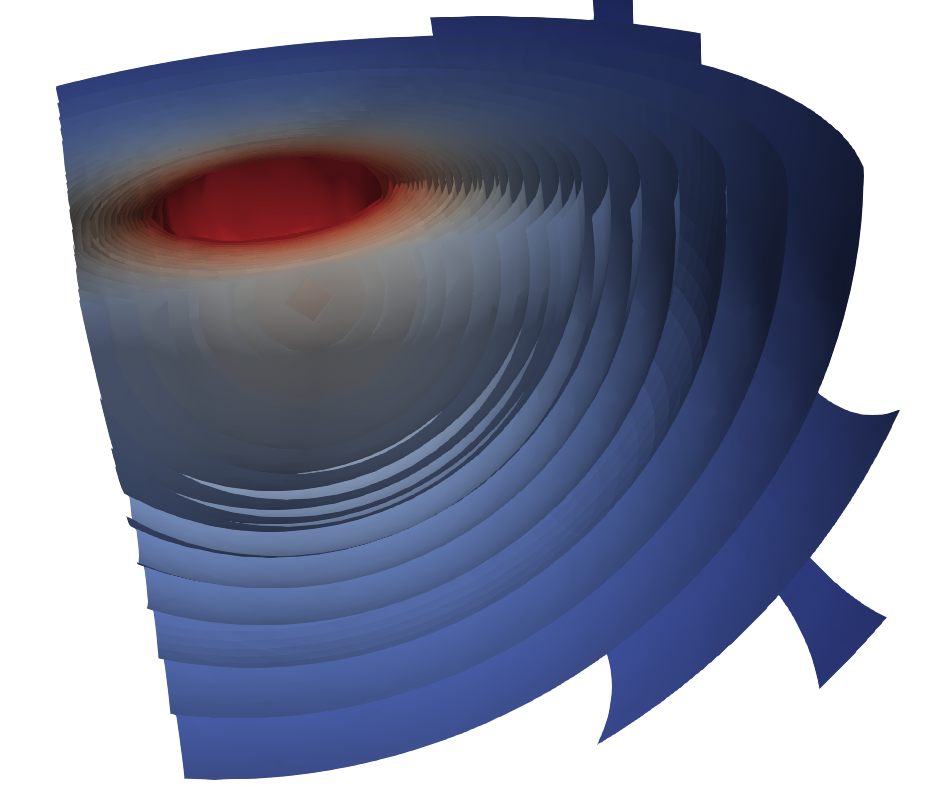}
    \caption{Background mesh $\Omega_h$ and immersed mesh of the sphere interface  $\gamma_h$ for the
    three dimensional case (left) and section of the contour plot for the
    approximate solution $u_h$ in \eqref{eqn:u_non_smmoth_3D},
    $\gamma=\gamma^3$.}
    \label{fig:contour_3D}
\end{figure}

\subsection{Computational times}
\label{sec:cpu-times}

In order to better understand the performance of the three methods, we consider a
breakdown of the computational costs into work precision diagrams. These provide a more fair measure of efficiency as they take into account the computational cost required to reach a given accuracy.

We report hereafter a breakdown of the computational times needed by our
implementations of the three proposed methods. All computations were carried out
on a 2.60GHz Intel Xeon processor. For each benchmark we report the average time required to solve 10 times the 3D smooth Problem~\ref{subsec:3Dsmooth}. 
We compute separately the required CPU times (in seconds) for the main tasks that each scheme has to perform. On a quasi-uniform mesh,
the number $N$ of background cells in $\Omega_h$ scales with
$\mathcal{O}(h_\Omega^{-3})$, and we expect the assembly of the stiffness matrix
to scale linearly in the number of cells. On the other hand, the number of
facets in $\gamma_h$ scales with $\mathcal{O}(h_\gamma^{-2})$; in our
experiments, the ratio $h_\Omega/h_\gamma$ is kept fixed, therefore we expect the
assembly of the coupling terms $\langle \lambda,v \rangle_{\gamma}$ and $\langle
u,v \rangle_{\gamma}$ to scale with $\mathcal{O}(N^{\frac{2}{3}})$.

This is indeed what we observe in the experiments as shown in CPU breakdown
plots of Figure~\ref{fig:computing_times_breakdown} for each method.

\begin{figure}[ht]
    \centering
    \includegraphics{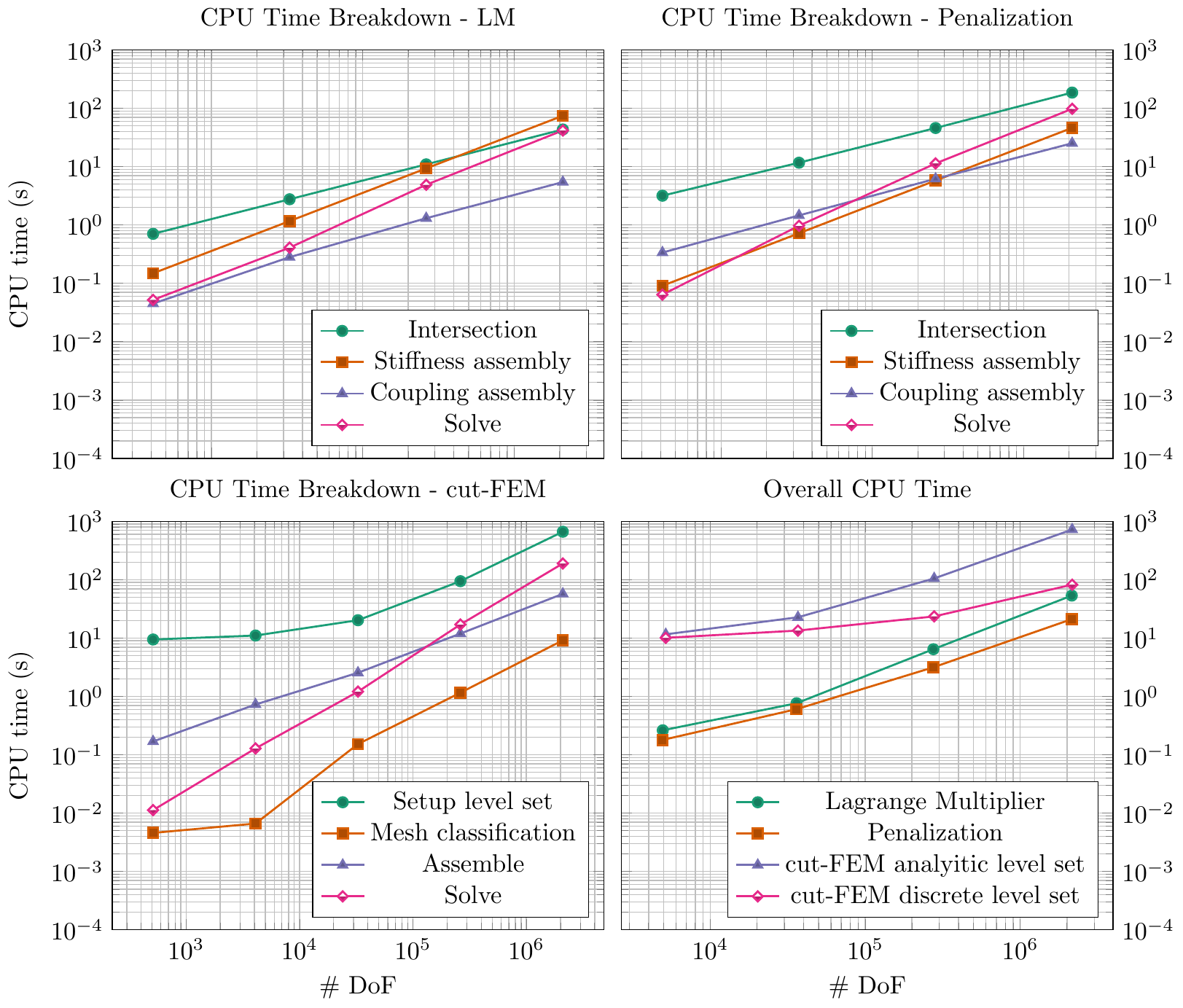}
    \caption{Breakdown of CPU times for the three schemes. In (bottom left) \emph{Setup level set} indicates the time required to interpolate the discrete level set described in Algorithm \ref{alg:discrete_level_set} onto the finite element space.
    \emph{Mesh classification} shows the time needed to partition the computational mesh and classify the cells in \emph{cut, interior} or \emph{outside} cells.
    Bottom right: overall CPU times to assemble the algebraic system for the smooth 3D test. For the cut-FEM method, we also show the CPU time obtained by using an analytical representation of the interface through an analytic level set function.
    }%
    \label{fig:computing_times_breakdown}
\end{figure}

The three schemes have comparable computational times. In particular, the Nitsche interface
penalization method exhibits lower global assembly times compared to the others.
However, it is well known that the number of iterations required to solve the
algebraic problem is influenced by the choice of the penalty parameter $\beta$
in \eqref{eqn:penalisation}, which determines simultaneously also the accuracy
of the numerical solution $u_h$. This can be better seen in work-precision
diagrams, where we compare the CPU times to solve each refinement cycle versus
the $L^2$ error for both test problems \eqref{eqn:u_smooth} and
\eqref{eqn:u_non_smmoth_3D} in Figure
\ref{fig:work_precision_diagrams}.
In the smooth scenario, results for the Lagrange multiplier and Nitsche's interface
penalization methods are almost overlapping both in terms of time and accuracy,
while cut-FEM shows larger computational times. The situation is different in the non-smooth case where cut-FEM better captures the discontinuity at the interface and thus gives  more accurate results, with a larger cost in terms of time for low degrees of freedom count, and with smaller cost for large degrees of freedom count, owing to the better convergence rate of the method. These results indicate that the additional implementation complexity
does pay back for non-smooth solutions.

\begin{figure}[ht]
    \centering
    \includegraphics{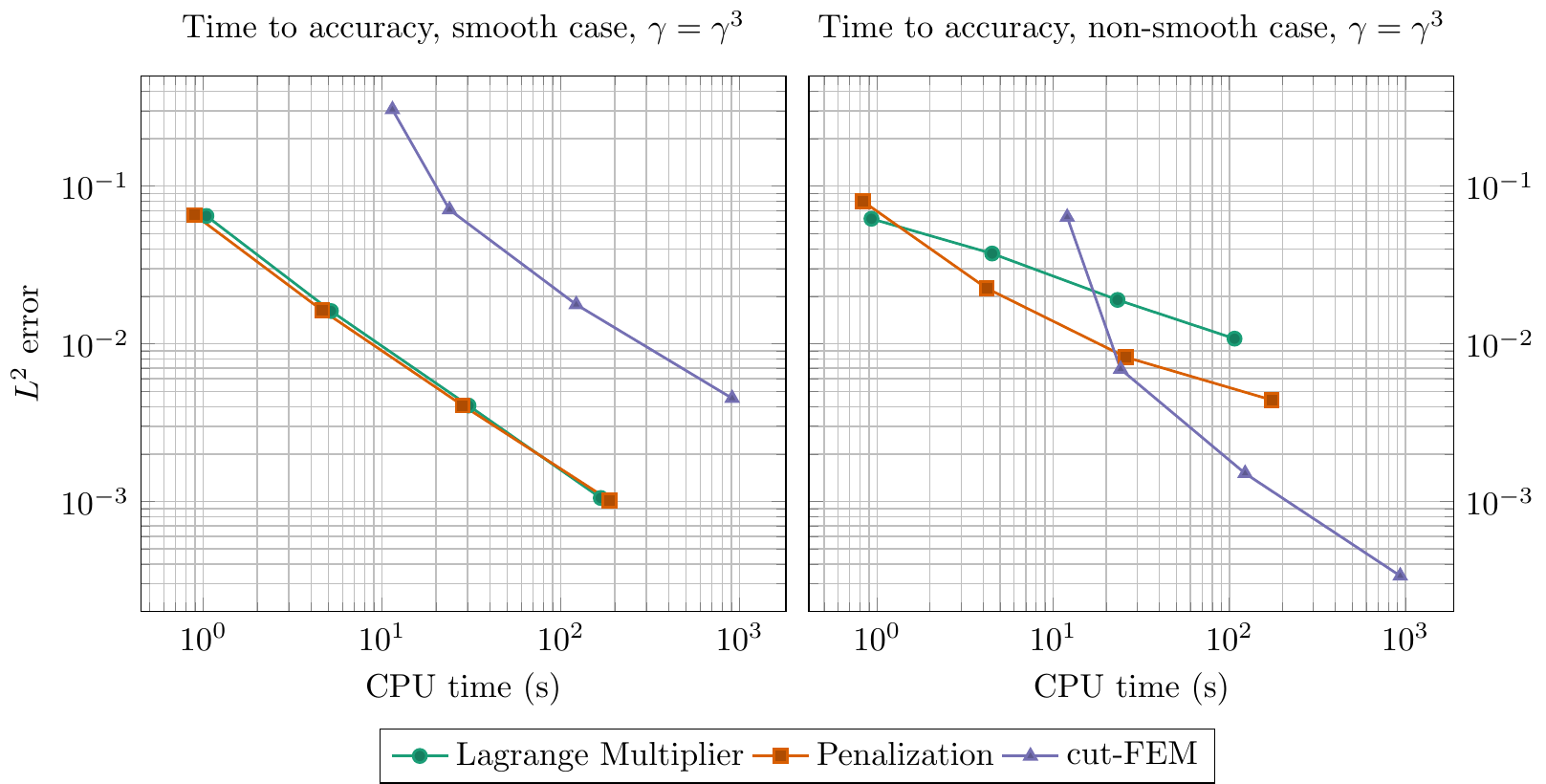}
  \caption{Work-precision diagrams for the two 3D tests.}%
  \label{fig:work_precision_diagrams}
\end{figure}

Based on the higher efficiency of the Lagrange multiplier and Nitsche penalization method in the smooth case, we speculate that these methods can be made competitive also in the non-smooth case by %
an improved local refinement strategy~\cite{HeltaiLei-2023-a}. 

\section{Conclusions}\label{sec:conc}

The numerical solution of partial differential equations modeling the interaction of physical phenomena across interfaces with complex, possibly moving, shapes is of great importance in many scientific fields. 
Boundary unfitted methods offer a valid alternative to remeshing and Arbitrary Lagrangian Eulerian formulations, but require care in the representation of the coupling terms between the interface and the bulk equations.

In this work, we performed a comparative analysis of three non-matching methods, namely the Lagrange multiplier method (or fictitious domain method), the Nitsche's interface penalization method, and the cut-FEM method, in terms of accuracy, computational cost, and implementation effort.

We presented the major algorithms used to integrate coupling terms on non-matching interfaces, discussed the benefit of computing accurate quadrature rules on mesh intersections, and concluded our analysis with a set of numerical experiments in two and three dimensions.

Our results show that accurate quadrature rules can significantly improve the accuracy of the numerical methods, and that there are cases in which simpler methods, like the Nitsche's interface penalization method, are competitive in terms of accuracy per computational effort. In general, the additional implementation burden of the cut-FEM method is justified by the higher accuracy that it achieves, even though the distance between cut-FEM and the other methods is not as large as one might expect, especially in three dimensions and for the solution of non-smooth problems.

We believe that this paper provides a valuable resource for researchers and
practitioners working on numerical methods for interface problems, particularly
in the context of elliptic PDEs coupled across heterogeneous dimensions. Source
codes used to produce the results of this paper are available at
\url{github.com/fdrmrc/non_matching_test_suite.git}. All the numerical experiments can be reproduced by using a \texttt{docker} running the shell
script \texttt{./scripts/run\_all\_tests.sh}.

\bibliography{refs}
\end{document}